\documentclass[11pt,a4paper]{amsart}

\usepackage[margin=1in]{geometry}
\usepackage{amsmath,amssymb,amsthm,mathtools}
\usepackage{enumitem}
\usepackage{booktabs}
\usepackage{hyperref}
\usepackage{graphicx}
\usepackage{multirow}

\usepackage{natbib}
\usepackage{xcolor}

\newtheorem{theorem}{Theorem}[section]
\newtheorem{proposition}[theorem]{Proposition}
\newtheorem{lemma}[theorem]{Lemma}
\newtheorem{corollary}[theorem]{Corollary}
\theoremstyle{definition}

\newtheorem{remark}[theorem]{Remark}

\usepackage{etoolbox}
\newtheorem{formulation}{}

\AtEndEnvironment{formulation}{\hfill\(\Diamond\)\par}

\newtheorem{formulationr}{}

\AtEndEnvironment{formulationr}{\hfill\(\Diamond\)\par}

\newcommand{\R}{\mathbb{R}}
\newcommand{\Xh}{X_\Delta}
\newcommand{\Dt}{\Delta t}
\newcommand{\Dx}{\Delta x}
\newcommand{\Ltt}[1]{L_{tt}^{(#1)}}
\newcommand{\Lxx}{L_{xx}}

\title[Riesz-Preconditioned Extragradient for Price-Formation MFGs]{A Riesz-Map Preconditioned Extragradient Method for Price Formation in Mean Field Games}
\author{Yeva Gevorgyan}
\address{King Abdullah University of Science and Technology (KAUST), Saudi Arabia}
\email{yeva.gevorgyan@kaust.edu.sa}

\author{Diogo Gomes}
\address{King Abdullah University of Science and Technology (KAUST), Saudi Arabia}
\email{diogo.gomes@kaust.edu.sa}

\date{}

\begin{document}
	
\begin{abstract}
	We develop and analyze a projected extragradient method, preconditioned by a Riesz map, for a first-order price-formation mean field game in one space dimension. The model couples
	Hamilton--Jacobi--Bellman and transport equations through a global
	market-clearing constraint. Finite propagation justifies truncating
	the first-order transport equation to a bounded interval.
	A monotone weak formulation incorporates the mixed initial--terminal
	data. Its Riesz map uses $L^2$ norms for density and price
	directions and a space-time $H^1$ norm for value-function
	directions, preconditioning the residual without adding higher-order
	regularization terms to the MFG equations. A density-only argument
	rules out cocoercivity on the localized admissible set in every
	equivalent Hilbert metric, motivating extragradient updates.
	For $H(x,p)=p^2/2$, we use a monotone upwind discretization with
	adjoint-consistent transport and a state-constraint boundary closure.
	We establish a mesh-independent Lipschitz bound on bounded convex
	admissible sets with uniform density, slope, and price bounds.
	With exact metric projection onto these sets, the iteration converges
	to a solution of the discrete variational inequality on each fixed
	grid, with an admissible step-size threshold independent of both
	mesh sizes. Convergence under mesh refinement is not proved.
	On a quadratic benchmark, all reported runs reach the stopping
	tolerance with one fixed step size across a factor-of-16 reduction
	in both mesh sizes. Price errors and value-function errors measured
	away from low-density regions decay approximately at first order
	under refinement, while density errors decrease more slowly.
\end{abstract}
	
\keywords{Mean field games, price formation, monotone operators,
	Riesz preconditioning, projected extragradient method,
	variational inequalities, mesh-independent step sizes,
	state constraints}	
	
	\maketitle
	
	\section{Introduction}
	\label{sec:intro}
	
	Mean field games (MFGs), introduced independently by Lasry and
	Lions~\cite{ll1,ll2,lasryMeanFieldGames2007} and Huang, Caines, and
	Malham\'e~\cite{huangLargePopulationStochastic2006,Caines2}, describe
	strategic interactions in large populations.
	Applications and related equilibrium models include crowd
	motion~\cite{achdouMeanFieldGames2019}, intraday electricity
	trading~\cite{aidEquilibriumPriceIntraday2022},
	renewable-capacity investment~\cite{alasseurMeanFieldModel2023},
	and speculative oil storage~\cite{achdouClassShorttermModels2022}.
We study the price-formation MFG introduced by Gomes and
Sa\'ude~\cite{gomesMeanFieldGameApproach2021}, in which agents
trade a commodity at a price determined by aggregate supply
and demand. Our aim is to develop and analyze a monotonicity-based
numerical method for its first-order formulation.
	
    Stochastic-supply
	extensions were developed in~\cite{GoGuRi2021,GoGuRi2021B},
	a measure-theoretic duality formulation in~\cite{YTDJduality2021},
	and machine-learning solvers with common noise
	in~\cite{gomes2023machineB}.
	Related price models include the parabolic free-boundary model
	of~\cite{MR2573148} and the game-theoretic order-book model
	of~\cite{evangelistaSaporitoThamsten2022}.
	We first state the first-order model formally. The standing assumptions
	are given in Section~\ref{sec:formulation}, and the weak and
	Hilbert-space realizations in Section~\ref{sec:monotone-ops}.	
	\begin{formulation}\label{form:price-mfg}
		Let \(T>0\). Given a continuous Hamiltonian
		\(H:\R\times\R\to\R\) that is continuously differentiable in the
		momentum variable \(p\), a supply function
		\(Q\in C([0,T])\), a terminal cost
		\(\bar u:\R\to\R\), and an initial probability density
		\(\bar m\in L^1(\R)\), with
		\[
		\bar m\ge 0,
		\qquad
		\int_\R \bar m(x)\,dx=1,
		\]
		we would like to find a value function \(u=u(x,t)\), a population density
		\(m=m(x,t)\), and a scalar price \(\varpi=\varpi(t)\) satisfying
		\begin{equation}\label{eq:det-system}
			\begin{cases}
				-u_t + H\bigl(x,\varpi(t)+u_x\bigr)=0,
				& (x,t)\in\R\times(0,T), \\[2mm]
				m_t-\Bigl(mD_pH\bigl(x,\varpi(t)+u_x\bigr)\Bigr)_x=0,
				& (x,t)\in\R\times(0,T), \\[2mm]
				\displaystyle
				\int_\R
				m(x,t)D_pH\bigl(x,\varpi(t)+u_x(x,t)\bigr)\,dx
				+Q(t)=0,
				& t\in(0,T),
			\end{cases}
		\end{equation}
		together with
		\begin{equation}\label{eq:det-bc}
			m(x,0)=\bar m(x),
			\qquad
			u(x,T)=\bar u(x).
		\end{equation}
		We look for solutions for which \(m(\cdot,t)\) remains a
		probability density and the market-clearing integral is finite.
		The state \(x\) represents a signed inventory or capital position.
		The backward Hamilton--Jacobi--Bellman (HJB) equation determines
		the optimal trading rate
		\(\alpha^*=-D_pH(x,\varpi+u_x)\), and the forward transport
		equation describes the resulting population distribution.
		Market clearing requires the population-average trading rate
		to equal the prescribed supply \(Q(t)\).
\end{formulation}

 Under
	\ref{H:convex}, 
    the left-hand side of the third equation in
\eqref{eq:det-system} is strictly increasing in \(\varpi\), so the price is unique for fixed
	\(m(\cdot,t)\) and \(u_x(\cdot,t)\).
	We develop the numerical method for the first-order
	system~\eqref{eq:det-system}; the viscous regularization in
	Section~\ref{sec:formulation} supplies the analytical estimates used for
    domain truncation and localization.
    
 Our focus is a monotonicity-preserving discretization of the coupled
 PDE residual, with a Hilbert metric that controls the transport
 derivatives without inverse mesh factors. We seek a computable
 Riesz representative with a mesh-independent Lipschitz bound on
 sets with uniform density, slope, and price bounds. The corresponding
 extragradient convergence argument requires projection in the same metric.   
 
   Related monotonicity-based constructions include
	\cite{almullaTwoNumericalApproaches2017}, which constructs an
	\(L^2\)-contractive flow with an adjoint transport discretization in a
	stationary continuous-state setting, and~\cite{GJS2}, which treats
	finite-state time-dependent MFGs using an \(H^1\) representation of
	functionals. 
	Here we analyze the coupled residual in \((m,u,\varpi)\),
	including the market-clearing block and the state-constraint
	boundary rows, in a mixed \(L^2\)--\(H^1\) metric.
	The mesh-independent estimate in Proposition~\ref{prop:lip}
	yields the extragradient convergence result in
	Corollary~\ref{cor:mesh-independent-step}.
	Other MFG discretizations include finite-difference schemes and their
	convergence analysis
	\cite{achdouMeanFieldGames2010,achdou2013finite,Achdou2015}
	and monotone-inclusion methods for second-order non-potential
	MFGs, including metric preconditioning for grid-independent
	algorithmic step sizes~\cite{nurbekyanMonotoneInclusionMethods2024}.
	For price-formation models, \cite{wangFungNurbekyan2025}
	develops a primal-dual method based on a variational
	characterization of equilibrium prices, and
	\cite{ashrafyanFullyDiscreteSemiLagrangianScheme2025} 
	uses monotonicity to establish convergence of a fully discrete
	semi-Lagrangian price scheme to a weak solution under mesh refinement.
    The
	extragradient iteration goes back to~\cite{korpelevich1976}; see
	\cite{facchinei2003finite,bauschke2017convex} for the general
	variational-inequality framework.

	We formulate the truncated first-order system as a monotone weak
	operator. Riesz representation preserves monotonicity in every
	equivalent Hilbert metric, but a density-only argument rules out
	cocoercivity on the localized admissible set in every such metric.
	This obstruction motivates the projected extragradient method.
	Our Riesz map represents the HJB and market-clearing residuals in
	the \(L^2\) density and price components, respectively, and lifts
	the transport residual through a space-time \(H^1\) value-function
	norm.
	
	For \(H(x,p)=p^2/2\), we use a monotone upwind discretization with
	adjoint transport. In the mixed \(L^2\)--Sobolev metric, its Riesz
	representative is uniformly Lipschitz on admissible sets with
	mesh-independent density, slope, and price bounds. We prove that
	the extragradient method with exact metric projection onto the
	bounded, slope-constrained admissible sets converges to a solution
	of the corresponding discrete variational inequality (VI) on each fixed
	grid, with an admissible step-size threshold independent of
	\(\Delta x\) and \(\Delta t\). At a discrete VI solution with
	inactive upper density bound, the HJB equation holds where the
	density is positive, while only the subsolution inequality is
	imposed in vacuum.
	
	In the reported experiments, all initial, predictor, and corrector
	states satisfy the uniform bounds, and value-function clipping is
	inactive. Thus each computed trajectory is a finite initial segment
	of the exact slope-constrained metric-projection iteration.
	On the quadratic benchmark, the price and value-function errors
	exhibit approximately first-order decay under refinement, with
	the value-function error measured away from low-density regions;
	the density error decreases more slowly. Convergence under mesh
	refinement is not established here.
	
The paper is organized as follows.
Sections~\ref{sec:formulation}--\ref{sec:monotone-ops} develop the
continuous analytical and operator framework.
Sections~\ref{sec:discrete}--\ref{sec:eg} construct and analyze the
fully discrete method.
Sections~\ref{sec:benchmark} and~\ref{sec:numerics} present the
quadratic benchmark and numerical experiments.
	
	\section{Analytical Preliminaries and Domain Truncation}
	\label{sec:formulation}

We introduce the viscous regularization and the standing
assumptions, establish the estimates used later, and justify the
bounded-domain reduction by finite propagation.
	
	\subsection{Viscous regularization}
	\label{sec:problem}
	
	We consider the following regularized model. 
	
	\begin{formulationr}\label{form:price-mfgr}
		Consider the setting of~\ref{form:price-mfg}, and let
		\(\epsilon>0\). Find
		\((u^\epsilon,m^\epsilon,\varpi^\epsilon)\) satisfying
		\begin{equation}\label{eq:reg-system}
			\begin{cases}
				-u^\epsilon_t
				+H\bigl(x,\varpi^\epsilon(t)+u^\epsilon_x\bigr)
				=\epsilon u^\epsilon_{xx},
				& (x,t)\in\R\times(0,T), \\[2mm]
				m^\epsilon_t
				-\Bigl(
				m^\epsilon
				D_pH\bigl(x,\varpi^\epsilon(t)+u^\epsilon_x\bigr)
				\Bigr)_x
				=\epsilon m^\epsilon_{xx},
				& (x,t)\in\R\times(0,T), \\[2mm]
				\displaystyle
				\int_\R
				m^\epsilon(x,t)
				D_pH\bigl(
				x,\varpi^\epsilon(t)+u^\epsilon_x(x,t)
				\bigr)\,dx
				+Q(t)=0,
				& t\in(0,T),
			\end{cases}
		\end{equation}
		with
		\begin{equation}\label{eq:reg-bc}
			m^\epsilon(x,0)=\bar m(x),
			\qquad
			u^\epsilon(x,T)=\bar u(x).
		\end{equation}
	\end{formulationr}

	Under sufficient integrability and decay at infinity,
	\(m^\epsilon(\cdot,t)\) remains a probability density. Since positive
	diffusion generally destroys compact support, the viscous problem
	remains posed on \(\R\); the bounded-domain reduction is made only
	after the vanishing-viscosity limit.
	
	\subsection{Main assumptions}
	\label{sec:assumptions}
	
	We use the following hypotheses, adapted
	from~\cite{gomesMeanFieldGameApproach2021}.
	
	\begin{enumerate}[label=\textup{(H\arabic*)},leftmargin=3em]
		\item\label{H:structure}
		The Hamiltonian is of the form
		$H(x,p)=H_0(p)-V(x)$, where $H_0:\R\to\R$ is $C^3$ and
		$V:\R\to\R$ is $C^2$ and bounded from below.
		In particular, $D_pH=H_0'$ is independent of $x$.
		
		\item\label{H:convex}
		There exist constants \(0<\theta\le\Theta\) such that
		\[
		\theta\le H_0''(p)\le\Theta
		\qquad\text{for all }p\in\R.
		\]
		Thus \(p\mapsto H(x,p)\) is uniformly convex, and \(H_0'\) is
		globally Lipschitz with constant \(\Theta\).
		
		\item\label{H:third-deriv}
		There exists \(C_3\ge0\) such that
		\(|H_0'''(p)|\le C_3\) for all \(p\in\R\).
		
		\item\label{H:potential}
		The terminal cost \(\bar u\in C^2(\R)\), and \(V,\bar u\) are
		globally Lipschitz on~\(\R\), with
		$\|V''\|_{L^\infty(\R)}\le C_V$ and
		$\|\bar u''\|_{L^\infty(\R)}\le C_{\bar u}$ for some
		$C_V,C_{\bar u}\ge 0$.
		
		\item\label{H:data}
		The initial density \(\bar m\in C_c^2(\R)\) satisfies
		\(\bar m\ge0\) and \(\int_\R\bar m\,dx=1\), and
		\(Q\in C^1([0,T])\).
	\end{enumerate}
	
	The assumptions on \(V\) and \(\bar u\) yield the spatial Lipschitz
	and semiconcavity estimates used below; see
	\cite{gomesMeanFieldGameApproach2021}.	
	The benchmark uses a quadratic terminal cost, which is not globally
	Lipschitz on \(\R\) and is therefore treated separately from the
	whole-line theory under~\ref{H:potential}; see
	Section~\ref{sec:benchmark}.

	\subsection{Price regularity}
	\label{sec:price-regularity}
	
	We show that every vanishing-viscosity limit \(\varpi\) is Lipschitz
	continuous under \ref{H:structure}--\ref{H:data}.
	The proof uses the uniform estimates
	\begin{equation}\label{eq:unif-lip-r}
		M_L
		:=
		\sup_{0<\epsilon\le1}
		\sup_{t\in[0,T]}
		\|u_x^\epsilon(\cdot,t)\|_{L^\infty(\R)}		
		<\infty
	\end{equation}
	and
	\begin{equation}\label{eq:unif-energy-r}
		E
		:=
		\sup_{0<\epsilon\le1}
		\int_0^T\!\!\int_\R
		H_0''(\varpi^\epsilon+u_x^\epsilon)\,
		(u_{xx}^\epsilon)^2\,m^\epsilon\,dx\,dt
		<\infty, 
	\end{equation}
	which follow from
	\cite[Propositions~1 and~5]{gomesMeanFieldGameApproach2021},
	respectively.

	\begin{proposition}[Uniform price bounds and Lipschitz limits]
		\label{prop:price-lipschitz}
		Consider the \ref{form:price-mfgr}.
		Assume \ref{H:structure}--\ref{H:data}. For
		\(0<\epsilon\le1\), let \((u^\epsilon,m^\epsilon,\varpi^\epsilon)\)
		be smooth solutions of the viscous
		system~\eqref{eq:reg-system}--\eqref{eq:reg-bc}, satisfying
		\eqref{eq:unif-lip-r} and \eqref{eq:unif-energy-r}, such that
		 \(m^\epsilon(\cdot,t)\) is a probability density for every
		\(t\in[0,T]\), with sufficient decay at spatial infinity to justify
		the integrations by parts below.
		Then the following hold.
		\begin{enumerate}[label=\textup{(\alph*)},leftmargin=3em]
			\item\label{item:price-w11-r}
			The family \((\varpi^\epsilon)_{0<\epsilon\le1}\) is uniformly
			bounded in \(W^{1,1}(0,T)\).
			\item\label{item:price-compact-r}
			Every sequence \(\epsilon_n\downarrow0\) admits a subsequence,
			not relabeled, along which \(\varpi^{\epsilon_n}\to\varpi\)
			uniformly on \([0,T]\).
			\item\label{item:price-limit-r}
			Every such limit satisfies \(\varpi\in W^{1,\infty}(0,T)\), with
			\[
			\|\dot\varpi\|_{L^\infty(0,T)}
			\le
			\frac{\|\dot Q\|_{L^\infty(0,T)}}{\theta}
			+\|V'\|_{L^\infty(\R)} .
			\]
		\end{enumerate}
	\end{proposition}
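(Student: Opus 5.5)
The plan is to differentiate the market-clearing constraint in time and use both PDEs to get an explicit formula for \(\dot\varpi^\epsilon\). Write \(p^\epsilon=\varpi^\epsilon+u^\epsilon_x\). By \ref{H:structure}, \(D_pH=H_0'\), so the constraint reads \(\int_\R m^\epsilon H_0'(p^\epsilon)\,dx=-Q(t)\). Differentiating gives
\[
\int_\R m^\epsilon_t H_0'(p^\epsilon)\,dx+\int_\R m^\epsilon H_0''(p^\epsilon)\bigl(\dot\varpi^\epsilon+u^\epsilon_{xt}\bigr)\,dx=-\dot Q .
\]
Differentiating the HJB equation in \(x\) gives
\[
u^\epsilon_{xt}=H_0'(p^\epsilon)u^\epsilon_{xx}-V'-\epsilon u^\epsilon_{xxx}.
\]
In the first integral I substitute \(m^\epsilon_t=(m^\epsilon H_0')_x+\epsilon m^\epsilon_{xx}\) and integrate by parts, using the assumed decay. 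The transport contribution \(-\int m^\epsilon H_0'H_0''u^\epsilon_{xx}\) then cancels exactly against \(\int m^\epsilon H_0''H_0'u^\epsilon_{xx}\) from the second integral. The two viscous terms combine as \(-\epsilon\int (m^\epsilon H_0''u^\epsilon_{xx})_x\,dx+\epsilon\int m^\epsilon H_0'''(u^\epsilon_{xx})^2\,dx\), and the divergence term vanishes. This yields the identity
\[
\dot\varpi^\epsilon\int_\R m^\epsilon H_0''(p^\epsilon)\,dx
=-\dot Q+\int_\R m^\epsilon H_0''(p^\epsilon)V'\,dx-\epsilon\int_\R m^\epsilon H_0'''(p^\epsilon)(u^\epsilon_{xx})^2\,dx .
\]

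Since \(m^\epsilon(\cdot,t)\) is a probability density and \(H_0''\ge\theta\), the weight \(\int m^\epsilon H_0''\) is at least \(\theta\). The \(V'\) term divided by this weight is a weighted average of \(V'\), so it is bounded by \(\|V'\|_{L^\infty}\). For the last term I use \ref{H:third-deriv} and \(m^\epsilon(u^\epsilon_{xx})^2\le\theta^{-1}H_0''m^\epsilon(u^\epsilon_{xx})^2\). This gives the decomposition \(\dot\varpi^\epsilon=A^\epsilon+B^\epsilon\), where
\[
\|A^\epsilon\|_{L^\infty}\le \|\dot Q\|_{L^\infty}/\theta+\|V'\|_{L^\infty}=:K,
\qquad
\|B^\epsilon\|_{L^1(0,T)}\le \epsilon C_3E/\theta^2 ,
\]
by \eqref{eq:unif-energy-r}.

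For the \(L^\infty\) (hence \(L^1\)) bound on \(\varpi^\epsilon\) I use monotonicity. By \eqref{eq:unif-lip-r}, \(p^\epsilon\in[\varpi^\epsilon-M_L,\varpi^\epsilon+M_L]\), so \(H_0'(\varpi^\epsilon-M_L)\le -Q\le H_0'(\varpi^\epsilon+M_L)\). Together with \(H_0'(q)-H_0'(0)\) having the sign of \(q\) and modulus at least \(\theta|q|\), this bounds \(|\varpi^\epsilon|\) by \(M_L+(\|Q\|_\infty+|H_0'(0)|)/\theta\). Combined with the derivative bound, this proves \ref{item:price-w11-r}.

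For \ref{item:price-compact-r} and \ref{item:price-limit-r}, a \(W^{1,1}\) bound alone gives only BV compactness, so I use the splitting. Set \(\psi^\epsilon(t)=\varpi^\epsilon(t)-\int_0^tB^\epsilon\,ds\). The family \(\psi^\epsilon\) is uniformly bounded and \(K\)-Lipschitz, so by Arzel\`a--Ascoli a subsequence converges uniformly to a \(K\)-Lipschitz limit. Since \(\sup_t|\int_0^tB^{\epsilon}\,ds|\le C\epsilon\to0\), \(\varpi^{\epsilon_n}\) has the same uniform limit. Moreover, any uniform limit \(\varpi\) satisfies \(|\varpi(t)-\varpi(s)|\le K|t-s|+\lim C\epsilon_n=K|t-s|\), which is the stated bound.

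The main obstacle is the identity in the first step: checking the exact cancellation of the transport terms and that the viscous remainder has the form \(\epsilon\int m^\epsilon H_0'''(u_{xx}^\epsilon)^2\), so that it is controlled by \(E\). The boundary terms are handled by the assumed decay.
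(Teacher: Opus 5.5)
Your proposal is correct and follows essentially the same route as the paper. You derive the same identity for \(\dot\varpi^\epsilon\) by differentiating market clearing, and your explicit check of the transport cancellation and of the viscous remainder \(\epsilon\int m^\epsilon H_0'''(p^\epsilon)(u^\epsilon_{xx})^2\,dx\) matches the paper's formula. You then use the same splitting into an \(L^\infty\)-bounded part and an \(O(\epsilon)\) remainder in \(L^1(0,T)\), and apply Arzel\`a--Ascoli to the antiderivative of the bounded part. The only deviation is the uniform bound on \(\varpi^\epsilon\): you bracket \(p^\epsilon\) between \(\varpi^\epsilon\pm M_L\) and use monotonicity of \(H_0'\), whereas the paper uses the \(\Theta\)-Lipschitz estimate \(|H_0'(\varpi^\epsilon)+Q|\le\Theta M_L\). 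Both arguments are valid and give comparable constants.
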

	
	\begin{proof}
		Let
		\[
		p^\epsilon(x,t)
		=
		\varpi^\epsilon(t)+u^\epsilon_x(x,t).
		\]
		Differentiating the market-clearing condition in
		\eqref{eq:reg-system}, using the HJB and transport equations to
		eliminate \(u_{xt}^\epsilon\) and \(m_t^\epsilon\), integrating by
		parts in \(x\), and solving for \(\dot\varpi^\epsilon\), we obtain
		\begin{equation}\label{eq:price-ode-r}
			\dot\varpi^\epsilon
			= \frac{\displaystyle
				-\dot Q
				+ \int_\R H_0''(p^\epsilon)\,V'(x)\,
				m^\epsilon\,dx
				- \epsilon\!\int_\R
				H_0'''(p^\epsilon)\,
				(u_{xx}^\epsilon)^2\,m^\epsilon\,dx}
			{\displaystyle
				\int_\R H_0''(p^\epsilon)\,
				m^\epsilon\,dx}\,.
		\end{equation}
		
		Since \(H_0''\ge\theta\) by~\ref{H:convex} and
		\(m^\epsilon(\cdot,t)\) is a probability density, the denominator
		in \eqref{eq:price-ode-r} is bounded below by~\(\theta\); hence the
		\(\dot Q\)-term is bounded by
		\(\|\dot Q\|_{L^\infty(0,T)}/\theta\). 
		The potential term is bounded by
		\(\|V'\|_{L^\infty(\R)}\), since it is a weighted average of \(V'\).
		For the viscous term,
		\(|H_0'''|\le C_3\) by~\ref{H:third-deriv} and \(H_0''\ge\theta\)
		give
		\[
		\left\|
		\frac{
			\epsilon\int_\R
			H_0'''(p^\epsilon)\,
			(u_{xx}^\epsilon)^2\,m^\epsilon\,dx
		}{
			\int_\R
			H_0''(p^\epsilon)\,m^\epsilon\,dx
		}
		\right\|_{L^1(0,T)}
		\le
		\frac{\epsilon C_3}{\theta^2}\,E
		=O(\epsilon),
		\]
		by the energy estimate~\eqref{eq:unif-energy-r}. Writing the
		first two terms in \eqref{eq:price-ode-r} as
		\(b^\epsilon\) and the viscous term as \(r^\epsilon\), we obtain
		\begin{equation}\label{eq:price-split-r}
			\dot\varpi^\epsilon=b^\epsilon+r^\epsilon,
			\qquad
			\|b^\epsilon\|_{L^\infty(0,T)}
			\le
			\frac{\|\dot Q\|_{L^\infty(0,T)}}{\theta}
			+\|V'\|_{L^\infty(\R)},
			\qquad
			\|r^\epsilon\|_{L^1(0,T)}\to 0 .
		\end{equation}
		
		By market clearing, \eqref{eq:unif-lip-r}, and the
		\(\Theta\)-Lipschitz continuity of \(H_0'\),
		\[
		|H_0'(\varpi^\epsilon(t))+Q(t)|\le\Theta M_L.
		\]
		Since \(H_0''\ge\theta\), it follows that
		\[
		\|\varpi^\epsilon\|_{L^\infty(0,T)}
		\le
		\frac{\Theta M_L+\|Q+H_0'(0)\|_{L^\infty(0,T)}}{\theta}.
		\]
		Together with \eqref{eq:price-split-r}, this proves the uniform
		\(W^{1,1}(0,T)\) bound in~\ref{item:price-w11-r}.

		Set
		\(B^\epsilon(t)=\varpi^\epsilon(0)+\int_0^t b^\epsilon(s)\,ds\).
		By the preceding bounds, the family \(B^\epsilon\) is uniformly bounded
		and equi-Lipschitz. Hence, along any sequence
		\(\epsilon_n\downarrow0\), a subsequence converges uniformly on
		\([0,T]\) by the Arzel\`a--Ascoli theorem. Since
		\[
		\sup_{t\in[0,T]}
		\left|
		\varpi^\epsilon(t)-B^\epsilon(t)
		\right|
		=
		\sup_{t\in[0,T]}
		\left|
		\int_0^t r^\epsilon(s)\,ds
		\right|
		\le
		\|r^\epsilon\|_{L^1(0,T)}
		\longrightarrow0,
		\]
		the same subsequence of \(\varpi^{\epsilon_n}\) converges uniformly
		to a limit \(\varpi\), which proves~\ref{item:price-compact-r}.
			Since the functions \(B^{\epsilon_n}\) have the common Lipschitz
			bound in \eqref{eq:price-split-r}, their uniform limit
			\(\varpi\) has the same bound. This proves
			\ref{item:price-limit-r}.
	\end{proof}
	
	\subsection{Semiconcavity and a density bound}
	\label{sec:semboundm}

	Semiconcavity yields an \(L^\infty\) bound for the viscous densities,
	uniformly in~\(\epsilon\); both estimates pass to the corresponding
	vanishing-viscosity limits.
	The uniform bounds on \(u_x^\epsilon\) and \(\varpi^\epsilon\) give
	\begin{equation}\label{eq:momentum-bound}
		\sup_{0<\epsilon\le1}
		\|\varpi^\epsilon+u_x^\epsilon\|_{L^\infty(\R\times(0,T))}
		\le M_p
	\end{equation}
	for some \(M_p>0\).
	
	\begin{proposition}[Semiconcavity and an \(L^\infty\) density bound]
		\label{prop:semiconcavity-density}
		Assume \ref{H:structure}--\ref{H:data}, and let
		\((u^\epsilon,m^\epsilon,\varpi^\epsilon)\), \(0<\epsilon\le1\), be a
		family of smooth solutions as in
		Proposition~\ref{prop:price-lipschitz}, satisfying the uniform momentum
		bound~\eqref{eq:momentum-bound}. There is a constant
		\(C_{\mathrm{sc}}\ge0\), depending only on the data and \(T\),
		such that \(u^\epsilon\) satisfies
		\begin{equation}\label{eq:semiconcavity-estimate}
			u^\epsilon(x+h,t)-2u^\epsilon(x,t)+u^\epsilon(x-h,t)
			\le
			C_{\mathrm{sc}}h^2
		\end{equation}
		for all \(x,h\in\R\), \(t\in[0,T]\), and \(0<\epsilon\le1\).
		Moreover, for every \(t\in[0,T]\),
		\begin{equation}\label{eq:density-linfty-uniform}
			\|m^\epsilon(\cdot,t)\|_{L^\infty(\R)}
			\le
			\|\bar m\|_{L^\infty(\R)}e^{\Theta C_{\mathrm{sc}}t}
			\le
			\|\bar m\|_{L^\infty(\R)}e^{\Theta C_{\mathrm{sc}}T},
		\end{equation}
		uniformly in \(0<\epsilon\le1\).
		Consequently, every locally uniform limit of \(u^\epsilon\) is
		semiconcave in \(x\), and every weak-star limit of \(m^\epsilon\) in
		\(L^\infty(\R\times(0,T))\) satisfies the final uniform bound in
		\eqref{eq:density-linfty-uniform}.
	\end{proposition}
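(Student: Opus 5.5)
The plan has two steps: a semiconcavity estimate for the viscous HJB equation, proved by the classical second-difference comparison argument, followed by a maximum-principle bound for the transport equation that uses this one-sided bound on \(u_{xx}^\epsilon\).

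\textbf{Semiconcavity.} Fix \(h\) and set
\[
w(x,t)=u^\epsilon(x+h,t)-2u^\epsilon(x,t)+u^\epsilon(x-h,t).
\]
Write \(p_\pm=\varpi^\epsilon+u^\epsilon_x(x\pm h,t)\) and \(p_0=\varpi^\epsilon+u^\epsilon_x(x,t)\). The HJB equation in \eqref{eq:reg-system} then gives
\[
-w_t-\epsilon w_{xx}+H_0(p_+)-2H_0(p_0)+H_0(p_-)=V(x+h)-2V(x)+V(x-h)\le C_Vh^2.
\]
Since \(H_0\) is convex, \(H_0(p_\pm)\ge H_0(p_0)+H_0'(p_0)(p_\pm-p_0)\). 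Hence
\[
H_0(p_+)-2H_0(p_0)+H_0(p_-)\ge H_0'(p_0)\,w_x,
\]
and \(w\) satisfies the linear differential inequality
\[
-w_t-\epsilon w_{xx}+H_0'(p_0)w_x\le C_Vh^2 .
\]
The coefficient \(H_0'(p_0)\) is bounded, by \eqref{eq:momentum-bound} and \ref{H:convex}.

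The terminal data satisfy \(w(\cdot,T)\le C_{\bar u}h^2\) by \ref{H:potential}. Also, \(w\) is bounded by \(2M_Lh\), which is enough for a maximum principle on \(\R\) once a small barrier \(\delta(1+x^2)\) is added. Comparing \(w\) with the supersolution \(C_{\bar u}h^2+C_Vh^2(T-t)\) then gives \eqref{eq:semiconcavity-estimate} with
\[
C_{\mathrm{sc}}=C_{\bar u}+C_VT .
\]
The same argument handles every \(h\), and the constant does not depend on \(\epsilon\). Dividing by \(h^2\) and letting \(h\to0\) yields \(u^\epsilon_{xx}\le C_{\mathrm{sc}}\).

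\textbf{Density bound.} Expanding the transport equation, and using that \(D_pH=H_0'\) is independent of \(x\) by \ref{H:structure}, gives
\[
m_t-H_0'(p)\,m_x-H_0''(p)\,u_{xx}\,m=\epsilon m_{xx},\qquad p=\varpi^\epsilon+u^\epsilon_x .
\]
Since \(m\ge0\), \(H_0''\le\Theta\) and \(u_{xx}\le C_{\mathrm{sc}}\), the zeroth-order term satisfies \(H_0''u_{xx}m\le\Theta C_{\mathrm{sc}}m\). Therefore \(\tilde m=e^{-\Theta C_{\mathrm{sc}}t}m\) is a subsolution of a linear parabolic equation with bounded drift and no zeroth-order term. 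Its initial data are bounded by \(\|\bar m\|_{L^\infty}\).

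A maximum principle on \(\R\) then gives \eqref{eq:density-linfty-uniform}. It requires the decay of \(m^\epsilon\) at infinity assumed in Proposition~\ref{prop:price-lipschitz}, or boundedness obtained from a penalized barrier. Nonnegativity of \(m^\epsilon\) is part of the hypothesis that \(m^\epsilon(\cdot,t)\) is a probability density.

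\textbf{Passing to limits.} The inequality \eqref{eq:semiconcavity-estimate} is pointwise and closed under locally uniform convergence, so it transfers to any limit of \(u^\epsilon\). The set \(\{\|m\|_{L^\infty}\le K\}\) is weak-star closed, so the density bound transfers to any weak-star limit of \(m^\epsilon\).

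\textbf{Main obstacle.} The main difficulty is justifying the maximum and comparison principles on the unbounded domain \(\R\). For \(w\) I would first try barriers of the form \(\delta(1+x^2)+\delta Ct\), using that \(w\) is bounded and the drift is bounded, and then let \(\delta\to0\). For \(m\), boundedness of the smooth solution at each time, or its assumed decay, makes the same barrier argument work.
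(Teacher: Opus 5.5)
Your proposal is correct and follows the paper's proof. The one-sided bound \(u^\epsilon_{xx}\le C_{\mathrm{sc}}\) goes into the expanded transport equation, comparison is applied to \(e^{-\Theta C_{\mathrm{sc}}t}m^\epsilon\), and both estimates pass to the limits by closedness. The only difference is that the paper takes the semiconcavity estimate from \cite[Proposition~2]{gomesMeanFieldGameApproach2021}, whereas you prove it directly by the second-difference comparison argument, which also gives the explicit constant \(C_{\mathrm{sc}}=C_{\bar u}+C_VT\).
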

	
	\begin{proof}
		The argument of
		\cite[Proposition~2]{gomesMeanFieldGameApproach2021} gives
		\eqref{eq:semiconcavity-estimate}. Since \(u^\epsilon\) is smooth,
		this implies
		\begin{equation}\label{eq:uxx-semiconcavity}
			u_{xx}^\epsilon(x,t)\le C_{\mathrm{sc}}.
		\end{equation}
		
		Using \(D_pH(x,p)=H_0'(p)\), the transport-diffusion equation can be
		written as
		\begin{equation}\label{eq:expanded-fp}
			m^\epsilon_t
			-
			H_0'(\varpi^\epsilon+u_x^\epsilon)m^\epsilon_x
			-
			\epsilon m^\epsilon_{xx}
			=
			H_0''(\varpi^\epsilon+u_x^\epsilon)
			u_{xx}^\epsilon m^\epsilon .
		\end{equation}
		By \ref{H:convex} and \eqref{eq:uxx-semiconcavity},
		\[
		H_0''(\varpi^\epsilon+u_x^\epsilon)
		u_{xx}^\epsilon
		\le
		\Theta C_{\mathrm{sc}}.
		\]
		By \eqref{eq:momentum-bound}, the drift in
		\eqref{eq:expanded-fp} is uniformly bounded. Applying the parabolic
		comparison principle on \(\R\), using the assumed spatial decay, to
		\[
		e^{-\Theta C_{\mathrm{sc}}t}m^\epsilon
		\]
		gives the first inequality in
		\eqref{eq:density-linfty-uniform}; the second follows from
		\(t\le T\).
		
		Finally, local uniform convergence preserves
		\eqref{eq:semiconcavity-estimate}, and the final uniform bound in
		\eqref{eq:density-linfty-uniform} passes to every weak-star limit in
		\(L^\infty(\R\times(0,T))\).
	\end{proof}

	\subsection{Domain truncation}
	\label{sec:truncation-price}
	
	For the regularized solutions considered above, set
	\[
	v^\epsilon(x,t)
	=
	-D_pH\bigl(x,\varpi^\epsilon(t)+u_x^\epsilon(x,t)\bigr).
	\]
	By \ref{H:structure} and \eqref{eq:momentum-bound}, there is
	\(B\ge0\), independent of \(\epsilon\), such that
	\begin{equation}\label{eq:bounded-feedback}
		\sup_{0<\epsilon\le1}
		\|v^\epsilon\|_{L^\infty(\R\times(0,T))}
		\le B.
	\end{equation}
	The density equation is
	\[
	m^\epsilon_t+(v^\epsilon m^\epsilon)_x
	=
	\epsilon m^\epsilon_{xx}.
	\]
	
	Choose \(R_0>0\) such that
	\(\operatorname{supp}\bar m\subset[-R_0,R_0]\).
	Let \(\chi:\R\to[0,1]\) be smooth and nondecreasing, with
	\(\chi=0\) on \((-\infty,0]\), \(\chi>0\) on \((0,\infty)\),
	and \(\chi=1\) on \([1,\infty)\). Set
	\[
	\varphi(x,t)=\chi(|x|-R_0-Bt).
	\]
	This function is smooth, since it vanishes near \(x=0\).
	Testing the density equation with \(\varphi\), using spatial
	cutoffs at infinity, gives
	\[
	\begin{aligned}
		\frac{d}{dt}\int_\R \varphi m^\epsilon\,dx
		&=
		\int_\R
		\bigl(\varphi_t+v^\epsilon\varphi_x
		+\epsilon\varphi_{xx}\bigr)m^\epsilon\,dx\\
		&\le
		\epsilon\|\chi''\|_{L^\infty(\R)}.
	\end{aligned}
	\]
	Indeed,
	\(\varphi_t+v^\epsilon\varphi_x
	=\chi'(|x|-R_0-Bt)(v^\epsilon\operatorname{sgn}x-B)\le0\),
	and \(m^\epsilon\) has unit mass. Since
	\(\varphi(\cdot,0)\bar m=0\), integration in time yields
	\[
	0\le
	\int_\R\varphi(x,t)m^\epsilon(x,t)\,dx
	\le
	\epsilon t\|\chi''\|_{L^\infty(\R)}.
	\]
	Integrating this estimate in time and passing to a weak-star
	limit \(m\) in \(L^\infty(\R\times(0,T))\) as \(\epsilon\to0\),
	after localization in space, gives \(\varphi m=0\) almost everywhere.
	For a first-order limit with a narrowly continuous
	probability-measure representative, the map
	\(t\mapsto\int_\R\varphi(x,t)m(x,t)\,dx\) is continuous
	and hence vanishes for every \(t\in[0,T]\). Therefore,
\[
\operatorname{supp}m(\cdot,t)
\subset[-R_0-Bt,R_0+Bt],
\qquad t\in[0,T].
\]
The same calculation with \(\epsilon=0\) proves this inclusion
directly for any narrowly continuous nonnegative weak solution
with initial density \(\bar m\) and velocity
\(v=-D_pH(x,\varpi+u_x)\) satisfying
\(|v|\le B\) for \(m\,dx\,dt\)-almost every \((x,t)\).	
	
	Choose a bounded interval \(\Omega=(a,b)\) such that
	\[
	[-R_0-BT,R_0+BT]\Subset\Omega.
	\]
	Then both \(m\) and
	\(mD_pH(x,\varpi+u_x)\) vanish in a neighborhood of
	\(\partial\Omega\times[0,T]\). Thus the restriction of \(m\) to
	\(\Omega\times[0,T]\) satisfies the transport equation with zero
	boundary flux.
	The whole-line value function may likewise be restricted to
	\(\Omega\); a standalone bounded-domain HJB formulation requires a compatible boundary convention, specified for the
	discrete scheme in Section~\ref{sec:ham}.

	\section{Monotone Operators in Hilbert Spaces}
	\label{sec:hilbert-monotone}

	Let \(\mathcal X\) be a real Hilbert space with inner product
	\((\cdot,\cdot)_{\mathcal X}\), induced norm
	\(\|\cdot\|_{\mathcal X}\), and dual \(\mathcal X'\). We denote the
	duality pairing by \(\langle\cdot,\cdot\rangle\).
	
	The Riesz operator
	\(\mathcal R:\mathcal X'\to\mathcal X\) is characterized by
	\begin{equation}\label{eq:abstract-riesz}
		(\mathcal R f,v)_{\mathcal X}
		=
		\langle f,v\rangle
		\qquad
		\forall\,v\in\mathcal X , 
	\end{equation}
	for \(f\in\mathcal X'\).
	Moreover, \(\mathcal R\) is an isometric isomorphism:
	\[
	\|\mathcal R f\|_{\mathcal X}=\|f\|_{\mathcal X'},
	\]
	where \(\|\cdot\|_{\mathcal X'}\) is the dual norm induced by
	\(\|\cdot\|_{\mathcal X}\).
	
	Let \(\mathcal A=\bar\omega+\mathcal X\) be an affine space,
	let \(\mathcal K\subset\mathcal A\) be convex, and let
	\(A:\mathcal K\to\mathcal X'\). The operator \(A\) is monotone if
	\[
	\langle A(\omega)-A(\widehat\omega),\,
	\omega-\widehat\omega\rangle
	\ge 0
	\qquad
	\forall\,\omega,\widehat\omega\in\mathcal K .
	\]
	
	For \(A:\mathcal K\to\mathcal X'\), its Riesz representative
	\(\mathcal R A:\mathcal K\to\mathcal X\) satisfies
	\begin{equation}\label{eq:abstract-lift-identity}
		(\mathcal R A(\omega)-\mathcal R A(\widehat\omega),\,
		\omega-\widehat\omega)_{\mathcal X}
		=
		\langle A(\omega)-A(\widehat\omega),\,
		\omega-\widehat\omega\rangle .
	\end{equation}
	Hence \(A\) is monotone in the duality pairing if and only if
	\(\mathcal R A\) is monotone in
	\((\cdot,\cdot)_{\mathcal X}\).
	
	Likewise, for \(\omega^*\in\mathcal K\),
	\eqref{eq:abstract-riesz} gives
	\[
	\langle A(\omega^*),\eta-\omega^*\rangle
	=
	(\mathcal R A(\omega^*),\eta-\omega^*)_{\mathcal X}
	\qquad
	\forall\,\eta\in\mathcal K.
	\]
	Therefore, the dual and Riesz formulations of the variational
	inequality are equivalent.
	
		In finite dimensions, every inner product can be written as
		\[
		(v,w)_{\mathcal X}=\langle Bv,w\rangle,
		\qquad
		\mathcal R=B^{-1},
		\]
		where \(B:\mathcal X\to\mathcal X'\) is represented by a symmetric positive definite matrix.

	The following saddle-gradient criterion motivates the saddle-point
	interpretation of Remark~\ref{rem:saddle-point} and is applied
	to the discrete operator in Proposition~\ref{prop:disc-mono}.
	
	\begin{lemma}[Monotonicity of saddle gradients]
		\label{lem:saddle-gradient-monotone}
		Suppose that
		\(\mathcal X=\mathcal X_-\times\mathcal X_+\), with corresponding
		variables \(\omega=(x,y)\), 
		and let \(\mathcal J\colon\mathcal O\to\R\) be Fréchet differentiable on an open set \(\mathcal O\) containing a nonempty convex product set
		\(\mathcal K=\mathcal K_-\times\mathcal K_+\).
		Assume that \(\mathcal J(\cdot,y)\) is concave on
		\(\mathcal K_-\) for every \(y\in\mathcal K_+\), and that
		\(\mathcal J(x,\cdot)\) is convex on \(\mathcal K_+\) for every
		\(x\in\mathcal K_-\).
		Then the
		saddle-gradient
		\[
		A(x,y)
		=
		\bigl(-D_x\mathcal J(x,y),\,D_y\mathcal J(x,y)\bigr)
		\]
		defines a monotone map \(A:\mathcal K\to\mathcal X'\).
		On an affine product space
		\((\bar x+\mathcal X_-)\times(\bar y+\mathcal X_+)\), the same
		conclusion follows by applying the result to
		\((\xi,\eta)\mapsto\mathcal J(\bar x+\xi,\bar y+\eta)\)
		on the translated admissible set.
	\end{lemma}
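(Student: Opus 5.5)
We reduce monotonicity of \(A\) to the first-order characterization of convexity and concavity along the segment joining two points of \(\mathcal K\). Fix \(\omega=(x,y)\) and \(\widehat\omega=(\widehat x,\widehat y)\) in \(\mathcal K\). We must show
\[
\langle A(\omega)-A(\widehat\omega),\omega-\widehat\omega\rangle
=
-\langle D_x\mathcal J(x,y)-D_x\mathcal J(\widehat x,\widehat y),x-\widehat x\rangle
+\langle D_y\mathcal J(x,y)-D_y\mathcal J(\widehat x,\widehat y),y-\widehat y\rangle\ge0 .
\]
Since \(\mathcal K\) is a product set, the mixed points \((x,\widehat y)\) and \((\widehat x,y)\) also lie in \(\mathcal K\). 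We use them to compare \(\mathcal J\) across the four corners \((x,y),(\widehat x,y),(x,\widehat y),(\widehat x,\widehat y)\).

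\textbf{Step 1 (gradient inequalities).} A Fréchet differentiable function \(g\) that is convex on a convex set \(C\) lying inside an open set where \(g\) is differentiable satisfies
\[
g(b)\ge g(a)+\langle Dg(a),b-a\rangle \qquad \forall\,a,b\in C.
\]
To see this, restrict to the segment \(a+s(b-a)\), \(s\in[0,1]\). The difference quotient \(\bigl(g(a+s(b-a))-g(a)\bigr)/s\) is nondecreasing in \(s\) and tends to the directional derivative \(\langle Dg(a),b-a\rangle\) as \(s\downarrow0\). Openness of \(\mathcal O\) ensures that \(D\mathcal J\) exists at every point of \(\mathcal K\), including boundary points of \(\mathcal K\). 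For concave functions the inequality is reversed.

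\textbf{Step 2 (four inequalities).} Convexity in \(y\) at fixed \(x\), and at fixed \(\widehat x\), gives
\[
\mathcal J(x,\widehat y)\ge\mathcal J(x,y)+\langle D_y\mathcal J(x,y),\widehat y-y\rangle,
\qquad
\mathcal J(\widehat x,y)\ge\mathcal J(\widehat x,\widehat y)+\langle D_y\mathcal J(\widehat x,\widehat y),y-\widehat y\rangle.
\]
Concavity in \(x\) at fixed \(y\), and at fixed \(\widehat y\), gives
\[
\mathcal J(\widehat x,y)\le\mathcal J(x,y)+\langle D_x\mathcal J(x,y),\widehat x-x\rangle,
\qquad
\mathcal J(x,\widehat y)\le\mathcal J(\widehat x,\widehat y)+\langle D_x\mathcal J(\widehat x,\widehat y),x-\widehat x\rangle.
\]

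\textbf{Step 3 (combine).} Subtract the two concavity inequalities from the two convexity inequalities and add the results. All four function values \(\mathcal J(x,y),\mathcal J(\widehat x,\widehat y),\mathcal J(x,\widehat y),\mathcal J(\widehat x,y)\) cancel. What remains is \(0\) bounded below by the negative of the expression in the display above, so that expression is \(\ge0\). The sign bookkeeping in this step is the only delicate point. Each mixed corner appears once in a convexity inequality and once in a concavity inequality, and this is exactly what makes the cancellation work. The step needs no second derivatives, which matters because only Fréchet differentiability is assumed.

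\textbf{Affine case.} The function \(\widetilde{\mathcal J}(\xi,\eta)=\mathcal J(\bar x+\xi,\bar y+\eta)\) is Fréchet differentiable on the open set \(\mathcal O-(\bar x,\bar y)\), with the same partial derivatives after translation. It inherits the concavity and convexity hypotheses on the translated product set, and differences of points are unchanged by translation. Applying the result above to \(\widetilde{\mathcal J}\) therefore gives monotonicity of \(A\) on \(\mathcal K\).
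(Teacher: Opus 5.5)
Your proof is correct and takes essentially the same approach as the paper. Both arguments use the product structure to bring in the mixed corners $(x,\widehat y)$ and $(\widehat x,y)$, and both combine the supporting-hyperplane inequalities for convexity in $y$ and concavity in $x$ so that the four function values cancel. The paper groups your four inequalities in pairs around the mixed difference $\Delta\mathcal J=\mathcal J(x,y)-\mathcal J(x,\widehat y)-\mathcal J(\widehat x,y)+\mathcal J(\widehat x,\widehat y)$: the $y$-pairing is bounded below by $\Delta\mathcal J$ and the $x$-pairing above by it. Your Step 1 and the affine-case argument supply details the paper leaves implicit.
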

	\begin{proof}
		For
		\(\omega=(x,y),\widehat\omega=(\widehat x,\widehat y)\in\mathcal K\),
		set
		\(\delta x=x-\widehat x\), \(\delta y=y-\widehat y\), and
		\[
		\Delta\mathcal J
		=
		\mathcal J(x,y)-\mathcal J(x,\widehat y)
		-\mathcal J(\widehat x,y)+\mathcal J(\widehat x,\widehat y).
		\]
		The supporting-hyperplane inequalities for convexity in \(y\) give
		\[
		\bigl\langle
		D_y\mathcal J(x,y)-D_y\mathcal J(\widehat x,\widehat y),
		\delta y
		\bigr\rangle
		\ge \Delta\mathcal J,
		\]
		whereas concavity in \(x\) gives
		\[
		-\bigl\langle
		D_x\mathcal J(x,y)-D_x\mathcal J(\widehat x,\widehat y),
		\delta x
		\bigr\rangle
		\ge -\Delta\mathcal J.
		\]
		Adding the two inequalities gives
		\[
		\langle A(\omega)-A(\widehat\omega),
		\omega-\widehat\omega\rangle\ge0,
		\]
		which proves the claim.
	\end{proof}
	
	\section{Continuous Monotone Operator and Riesz Representative}
	\label{sec:monotone-ops}
	
	We formulate the truncated first-order price-formation MFG
	as a monotone weak operator, introduce its Riesz representative in a
	concrete Hilbert realization, and identify a density-only obstruction
	to cocoercivity that motivates the discrete extragradient method.
	
	\subsection{Monotone operator formulation}
	\label{sec:monotone-cont}
	
	Let \(\Omega_T=\Omega\times(0,T)\), and let \(\mathcal M\) be a
	Banach space of density directions on~\(\Omega_T\). Let \(\mathcal U\) be a Banach space of value-function directions with
	weak derivatives \(\tilde u_t,\tilde u_x\), well-defined traces, and
	\(\tilde u(\cdot,T)=0\). Let \(\mathcal P\) be a Banach space of price
	directions, and set
	\[
	\mathcal X
	=
	\mathcal M\times\mathcal U\times\mathcal P .
	\]
	For now, we leave these spaces unspecified; the weak
	action~\eqref{eq:cont-A-weak} and the monotonicity of
	Proposition~\ref{prop:cont-mono} involve only the duality pairing
	between \(\mathcal X\) and \(\mathcal X'\), so no Hilbert inner product is needed at this stage.
	Fix a space-time extension of the terminal cost, still denoted by
	\(\bar u\), and define
	\[
	\mathcal A
	=
	\mathcal M\times(\bar u+\mathcal U)\times\mathcal P .
	\]
	We consider states
	\(\omega=(m,u,\varpi)\in\mathcal A\) with \(m\ge0\) almost
	everywhere. 
	The terminal trace condition is encoded in the definition of
	\(\mathcal A\), whereas \(m(\cdot,0)=\bar m\) is encoded by the
	boundary term in the weak transport formulation, so no initial trace
	of \(m\) is required.
	
	For \(\omega=(m,u,\varpi)\in\mathcal A\) and
	\(\tilde\omega=(\tilde m,\tilde u,\tilde\varpi)\in\mathcal X\),
	define the weak action of \(A(\omega)\) by
	\begin{align}
		\langle A(\omega),\tilde\omega\rangle
		&=
		\int_0^T\!\!\int_\Omega
		\bigl(u_t-H(x,\varpi+u_x)\bigr)\tilde m\,dx\,dt
		\notag\\
		&\quad
		+ \int_0^T\!\!\int_\Omega
		m\bigl(-\tilde u_t+D_pH(x,\varpi+u_x)\tilde u_x\bigr)\,dx\,dt
		-\int_\Omega \bar m(x)\tilde u(x,0)\,dx
		\notag\\
		&\quad
		+ \int_0^T \tilde\varpi(t)
		\Bigl(\int_\Omega m\,D_pH(x,\varpi+u_x)\,dx+Q(t)\Bigr)\,dt .
		\label{eq:cont-A-weak}
	\end{align}
	Whenever the right-hand side is bounded with respect to the norm of
	\(\mathcal X\), it defines an element \(A(\omega)\in\mathcal X'\).
	In the transport component, the derivatives fall on \(\tilde u\); the
	term at \(t=0\) encodes the initial density, and
	\(\tilde u(\cdot,T)=0\) removes the terminal term. The absence of a
	spatial boundary term corresponds to the zero-flux truncation of
	Section~\ref{sec:truncation-price}.
	
	For sufficiently regular states and rich enough test spaces,
	\(A(\omega)=0\) is the weak formulation of the HJB, transport, and
	market-clearing equations, with the initial and terminal data encoded
	as above. No spatial boundary condition for \(u\) is imposed here.

	\begin{proposition}[Monotonicity of the weak operator]
		\label{prop:cont-mono}
		Assume \ref{H:structure} and \ref{H:convex}. Let
		\(\omega,\widehat\omega\in\mathcal A\) have nonnegative
		density components, and suppose that \eqref{eq:cont-A-weak} defines
		\(A(\omega),A(\widehat\omega)\in\mathcal X'\). Then
		\begin{equation}\label{eq:cont-mono}
			\langle A(\omega)-A(\widehat\omega),
			\omega-\widehat\omega\rangle
			\ge0.
		\end{equation}
	\end{proposition}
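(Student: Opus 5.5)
We expand the pairing directly, setting \(\delta m=m-\widehat m\), \(\delta u=u-\widehat u\in\mathcal U\) and \(\delta\varpi=\varpi-\widehat\varpi\), and abbreviating \(p=\varpi+u_x\), \(\widehat p=\widehat\varpi+\widehat u_x\). Then \(\delta p=\delta\varpi+\delta u_x\). The difference \(\delta u\) is an admissible test direction, since the terminal data cancel. The boundary term \(-\int_\Omega\bar m\,\tilde u(x,0)\,dx\) does not depend on \(\omega\), so it cancels in \(A(\omega)-A(\widehat\omega)\).

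We first collect the time-derivative terms. The HJB block contributes \(\iint \delta u_t\,\delta m\), and the transport block contributes \(-\iint \delta m\,\delta u_t\), so these cancel exactly. This is the antisymmetric (skew) part of the operator.

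What remains is
\[
\iint\Bigl[-\bigl(H(x,p)-H(x,\widehat p)\bigr)\delta m
+\bigl(mH_0'(p)-\widehat mH_0'(\widehat p)\bigr)\delta u_x\Bigr]dx\,dt
+\int_0^T\delta\varpi\int_\Omega\bigl(mH_0'(p)-\widehat mH_0'(\widehat p)\bigr)dx\,dt .
\]
Here \ref{H:structure} gives \(D_pH=H_0'\). Since \(\delta\varpi\) depends only on \(t\), the last two pieces combine into
\(\iint (mH_0'(p)-\widehat mH_0'(\widehat p))\,\delta p\).
This combination is the key structural point: the price direction and the slope direction enter only through \(\delta p\). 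The potential \(V\) cancels in \(H(x,p)-H(x,\widehat p)=H_0(p)-H_0(\widehat p)\).

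Next we regroup the integrand pointwise as
\[
m\bigl[H_0(\widehat p)-H_0(p)+H_0'(p)(p-\widehat p)\bigr]
+\widehat m\bigl[H_0(p)-H_0(\widehat p)-H_0'(\widehat p)(p-\widehat p)\bigr].
\]
By convexity of \(H_0\) from \ref{H:convex}, both brackets are nonnegative; each is a Bregman-type remainder. Since \(m,\widehat m\ge0\) almost everywhere, the integrand is nonnegative, and integrating gives \eqref{eq:cont-mono}. In fact the integrand is bounded below by \(\tfrac{\theta}{2}(m+\widehat m)(\delta p)^2\), which we may record for later use.

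The main obstacle is not the algebra but justification. We must check that each term in the expansion is separately finite, so that the cancellation of the \(\delta u_t\) terms and the regrouping are legitimate rather than formal rearrangements of divergent integrals. The hypothesis that \(A(\omega),A(\widehat\omega)\in\mathcal X'\) gives only boundedness of the full pairings, so we rely on it as follows. Testing with \(\delta\omega\in\mathcal X\) makes each of the three blocks of \eqref{eq:cont-A-weak} well defined, because each block is the action of \(A\) on a single component direction and each is finite. The rearrangement is then done inside these finite integrals. If needed, we would add the standing integrability (e.g.\ \(m\in L^2\), \(u_t\in L^2\), bounded \(p\)) implicit in the choice of spaces.
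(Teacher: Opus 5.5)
Your proposal is correct and follows the paper's proof essentially step for step. You cancel the initial-data and supply terms and the skew pair $\delta u_t\,\delta m$, merge the slope and price directions into $\delta p=\delta u_x+\delta\varpi$, and split the integrand into $m$- and $\widehat m$-weighted convexity remainders, obtaining the same $\tfrac{\theta}{2}(m+\widehat m)|\delta p|^2$ lower bound. Your justification paragraph goes beyond the paper, which does this computation formally; on $\mathcal K_M$, where the momentum is bounded and the realization is $L^2(\Omega_T)\times H^1_{\mathrm{term}}\times L^2(0,T)$, every term is separately integrable, so the rearrangement is immediate.
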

	
	\begin{proof}
		Write
		\[
		z=\varpi+u_x,
		\qquad
		\widehat z=\widehat\varpi+\widehat u_x,
		\]
		and abbreviate
		\[
		H=H(x,z),\qquad
		\widehat H=H(x,\widehat z),\qquad
		H_p=D_pH(x,z),\qquad
		\widehat H_p=D_pH(x,\widehat z).
		\]
		Set
		\[
		\delta\omega
		=
		\omega-\widehat\omega
		=
		(\delta m,\delta u,\delta\varpi)
		\in\mathcal X.
		\]
		Evaluating the difference against \(\delta\omega\), the fixed
		initial-data and supply terms cancel, as do the temporal terms
		\(\delta u_t\,\delta m-\delta m\,\delta u_t\).
		 Since
		\(\delta u_x+\delta\varpi=z-\widehat z\), we obtain
		\begin{align}
			\langle A(\omega)-A(\widehat\omega),\omega-\widehat\omega\rangle
			&=
			\int_{\Omega_T}
			\bigl[
			-(H-\widehat H)\delta m
			+(mH_p-\widehat m\widehat H_p)(z-\widehat z)
			\bigr]\,dx\,dt
			\notag\\
			&=
			\int_{\Omega_T}
			m\bigl[
			H_p(z-\widehat z)-(H-\widehat H)
			\bigr]\,dx\,dt
			\notag\\
			&\quad
			+
			\int_{\Omega_T}
			\widehat m\bigl[
			(H-\widehat H)-\widehat H_p(z-\widehat z)
			\bigr]\,dx\,dt .
			\label{eq:cont-convex-split}
		\end{align}
		By convexity of \(p\mapsto H(x,p)\),
		\[
		H-\widehat H
		\le H_p(z-\widehat z),
		\qquad
		H-\widehat H
		\ge \widehat H_p(z-\widehat z).
		\]
		Hence both brackets in~\eqref{eq:cont-convex-split} are nonnegative,
		and \eqref{eq:cont-mono} follows from \(m,\widehat m\ge0\). In fact, uniform convexity of $H$ in $p$ gives
		\[
		\langle A(\omega)-A(\widehat\omega),
		\omega-\widehat\omega\rangle
		\ge
		\frac{\theta}{2}
		\int_{\Omega_T}(m+\widehat m)|z-\widehat z|^2\,dx\,dt.
		\]
	\end{proof}
	
	\begin{remark}[Saddle-point interpretation]
		\label{rem:saddle-point}
		For comparison with the discrete construction, formally define
		\begin{equation}\label{eq:saddle-functional}
			\mathcal J(m,u,\varpi)
			=
			\int_0^T\!\!\int_\Omega
			\bigl(H(x,\varpi(t)+u_x)-u_t\bigr)m\,dx\,dt
			-\int_\Omega u(x,0)\bar m(x)\,dx
			+\int_0^T Q(t)\varpi(t)\,dt .
		\end{equation}
		Where these variations are justified, differentiating
		\eqref{eq:saddle-functional} shows that,
		with the ordering \(\omega=(m,u,\varpi)\), \(A\) is the
		saddle-gradient
		\begin{equation}\label{eq:saddle-gradient}
			A(m,u,\varpi)
			=
			\bigl(
			-D_m\mathcal J(m,u,\varpi),\,
			D_u\mathcal J(m,u,\varpi),\,
			D_\varpi\mathcal J(m,u,\varpi)
			\bigr).
		\end{equation}
		
		On such a domain,
		\(\mathcal J\) is affine, hence concave, in \(m\) and, for \(m\ge0\),
		convex in \((u,\varpi)\), which is the convex--concave structure of
		Lemma~\ref{lem:saddle-gradient-monotone}. The discrete operator of Section~\ref{sec:discrete} is built from the discrete analogue of~\eqref{eq:saddle-functional}, where the same criterion is applied 
		in Proposition~\ref{prop:disc-mono}.
	\end{remark}

	\subsection{Function-space realization and localized admissible set}
	\label{sec:spaces-hilbert}
	
We use \(L^2\) norms for the density and price directions and a
space-time \(H^1\) norm for the value-function direction.
Pointwise bounds on the momentum make the nonlinear terms in
\eqref{eq:cont-A-weak} compatible with this choice.
Define
	\begin{equation}\label{eq:H1T}
		H^1_{\mathrm{term}}
		=
		\bigl\{
		\varphi\in H^1(\Omega_T):
		\varphi(\cdot,T)=0
		\bigr\},
	\end{equation}
	and take
	\begin{equation}\label{eq:X-hilbert-def}
		\mathcal X
		=
		L^2(\Omega_T)
		\times H^1_{\mathrm{term}}
		\times L^2(0,T).
	\end{equation}
	The \(H^1_{\mathrm{term}}\) factor controls
	\(\tilde u_t,\tilde u_x\), and the trace at \(t=0\), while the density
	and price directions require only \(L^2\) control.
	
	We equip \(\mathcal X\) with the inner product
	\begin{align}
		(\eta,\tilde\eta)_{\mathcal X}
		&=
		\int_0^T\!\!\int_\Omega
		\psi\,\tilde\psi\,dx\,dt
		\notag\\
		&\quad+
		\int_0^T\!\!\int_\Omega
		\bigl(
		\phi_t\tilde\phi_t
		+\phi_x\tilde\phi_x
		+\phi\tilde\phi
		\bigr)\,dx\,dt
		+
		\int_0^T\chi\tilde\chi\,dt,
		\label{eq:X-ip}
	\end{align}
	for
	\(\eta=(\psi,\phi,\chi)\) and
	\(\tilde\eta=(\tilde\psi,\tilde\phi,\tilde\chi)\in\mathcal X\).
	
	With the fixed extension \(\bar u\) taken in \(H^1(\Omega_T)\),
	the affine state space becomes
	\[
	\mathcal A
	=
	L^2(\Omega_T)
	\times
	\bigl(\bar u+H^1_{\mathrm{term}}\bigr)
	\times L^2(0,T).
	\]

	For
	\(M=(M_m,M_g,M_\varpi)\in (0,\infty)^3\), define the localized
	admissible set
	\begin{equation}\label{eq:KM-cont}
		\mathcal K_M
		=
		\left\{
		\omega=(m,u,\varpi)\in\mathcal A:
		\begin{aligned}
			&0\le m\le M_m
			\quad\text{a.e. in }\Omega_T,\\
			&\|u_x\|_{L^\infty(\Omega_T)}\le M_g,\\
			&\|\varpi\|_{L^\infty(0,T)}\le M_\varpi
		\end{aligned}
		\right\}.
	\end{equation}
	The set \(\mathcal K_M\) is convex and closed in \(\mathcal A\) with
	the metric induced by \(\mathcal X\); the pointwise bounds pass to
	strong \(L^2\) limits after extracting almost-everywhere convergent
	subsequences. 
	The set need not be bounded in \(\mathcal X\), since no uniform bound
	on \(u_t\) is imposed.
	
	The estimates of Section~\ref{sec:formulation} motivate choosing
	\(M_\varpi,M_g,M_m\) to contain the continuous solution; analogous
	bounds for the discrete iterates are imposed or monitored separately.
	
	For every \(\omega\in\mathcal K_M\), the momentum \(p=\varpi+u_x\) satisfies
	\[
	\|p\|_{L^\infty(\Omega_T)}
	\le M_g+M_\varpi.
	\]
	Since \(H\) and \(D_pH\) are continuous on the compact set
	\[
	\overline\Omega
	\times[-M_g-M_\varpi,M_g+M_\varpi],
	\]
	there is \(C_M>0\) such that
	\[
	|H(x,p)|+|D_pH(x,p)|
	\le C_M
	\qquad
	\text{a.e. in }\Omega_T.
	\]
	Consequently,
	\[
	u_t-H(x,p)\in L^2(\Omega_T),
	\qquad
	m,\;mD_pH(x,p)\in L^2(\Omega_T),
	\]
	so the HJB and transport components are bounded on
	\(L^2(\Omega_T)\) and \(H^1_{\mathrm{term}}\), respectively.
	
	Since \(\bar m\in L^2(\Omega)\) and the trace map
	\(H^1(\Omega_T)\to L^2(\Omega)\) is continuous, the initial-data term
	is bounded.
	Finally,
	\[
	\left\|
	\int_\Omega
	m(x,t)D_pH\bigl(x,p(x,t)\bigr)\,dx
	\right\|_{L^2(0,T)}
	\le
	C_M|\Omega|^{1/2}
	\|m\|_{L^2(\Omega_T)}.
	\]
	Since \(Q\in L^2(0,T)\) by~\ref{H:data}, the market-clearing residual
	belongs to \(L^2(0,T)\). 
	Hence \(A:\mathcal K_M\to\mathcal X'\) is well defined and, by
	Proposition~\ref{prop:cont-mono}, monotone. The upper bound on \(m\) is not needed for well-definedness;
	its discrete analogue is used in the Lipschitz estimate of
	Section~\ref{sec:lip}.
	
	\subsection{Riesz representative}
	\label{sec:hilbert-riesz}
	
	For \(\omega=(m,u,\varpi)\in\mathcal K_M\), write the three components
	of \(A(\omega)\) as
	\[
	A(\omega)
	=
	\bigl(
	R_{\mathrm{HJB}}(\omega),
	R_{\mathrm{tr}}(\omega),
	R_{\mathrm{mc}}(\omega)
	\bigr),
	\]
	where
	\begin{equation}\label{eq:cont-R-HJB}
		R_{\mathrm{HJB}}(\omega)
		=
		u_t-H(x,\varpi+u_x)
		\in L^2(\Omega_T),
	\end{equation}
	the transport component is the functional
	\(R_{\mathrm{tr}}(\omega)\in(H^1_{\mathrm{term}})'\) defined by
	\begin{align}
		\langle R_{\mathrm{tr}}(\omega),\tilde u\rangle
		&=
		\int_0^T\!\!\int_\Omega
		m\bigl(
		-\tilde u_t
		+D_pH(x,\varpi+u_x)\tilde u_x
		\bigr)\,dx\,dt
		\notag\\
		&\quad
		-\int_\Omega
		\bar m(x)\tilde u(x,0)\,dx,
		\qquad
		\tilde u\in H^1_{\mathrm{term}},
		\label{eq:cont-R-tr}
	\end{align}
	and
	\begin{equation}\label{eq:cont-R-mc}
		R_{\mathrm{mc}}(\omega)(t)
		=
		\int_\Omega
		m(x,t)D_pH\bigl(x,\varpi(t)+u_x(x,t)\bigr)\,dx
		+Q(t)
		\in L^2(0,T).
	\end{equation}
	
	The Riesz operator
	\(\mathcal R:\mathcal X'\to\mathcal X\) associated with
	\eqref{eq:X-ip} has the block-diagonal form
	\begin{equation}\label{eq:cont-Riesz-block}
		\mathcal R
		=
		\operatorname{diag}\!\bigl(
		I,\mathcal R_u,I
		\bigr),
	\end{equation}
	where
	\[
	\mathcal R_u:
	(H^1_{\mathrm{term}})'
	\longrightarrow
	H^1_{\mathrm{term}}
	\]
	is the Riesz operator associated with the value function in
	\eqref{eq:X-ip}.
	
	Consequently,
	\begin{equation}\label{eq:cont-RA-components}
		\mathcal RA(\omega)
		=
		\bigl(
		R_{\mathrm{HJB}}(\omega),
		\Phi(\omega),
		R_{\mathrm{mc}}(\omega)
		\bigr),
	\end{equation}
	where
	\(\Phi(\omega)=\mathcal R_uR_{\mathrm{tr}}(\omega)
	\in H^1_{\mathrm{term}}\) is characterized by
	\begin{equation}\label{eq:Phi-weak}
		\int_0^T\!\!\int_\Omega
		\bigl(
		\Phi_t\,\tilde u_t
		+\Phi_x\,\tilde u_x
		+\Phi\,\tilde u
		\bigr)\,dx\,dt
		=
		\langle R_{\mathrm{tr}}(\omega),\tilde u\rangle
		\qquad
		\forall\,\tilde u\in H^1_{\mathrm{term}}.
	\end{equation}
	Thus the HJB and market-clearing residuals are represented directly in
	the \(L^2\) density and price norms, while the transport residual is
	lifted through the space-time \(H^1\) value-function norm.
	
	Formally, for smooth \(m\) and
	\(mD_pH(x,\varpi+u_x)\) with zero spatial flux, \(\Phi\) solves
	\[
	-\Phi_{tt}-\Phi_{xx}+\Phi
	=
	m_t-\bigl(mD_pH(x,\varpi+u_x)\bigr)_x
	\]
	with
	\[
	\Phi(\cdot,T)=0,
	\qquad
	\Phi_t(\cdot,0)=\bar m-m(\cdot,0),
	\qquad
	\partial_n\Phi=0
	\ \text{on }\partial\Omega\times(0,T).
	\]
	
	For general \(\omega\in\mathcal K_M\), \eqref{eq:Phi-weak}
	remains the definition: the zero-flux assumption is not imposed
	in \(\mathcal K_M\), and an \(L^2\) density need not have an
	initial trace.
	By Proposition~\ref{prop:cont-mono} and
	\eqref{eq:abstract-lift-identity}, \(\mathcal R A\) is monotone on
	\(\mathcal K_M\).
	
	\subsection{Failure of cocoercivity}
	\label{sec:cocoercivity}

For \(\beta>0\), \(\mathcal R A\) is \(\beta\)-cocoercive on
\(\mathcal K_M\) if
\[
\bigl(\mathcal{R}A(\omega)-\mathcal{R}A(\widehat\omega),\;
\omega-\widehat\omega\bigr)_{\mathcal{X}}
\ge
\beta\,
\bigl\|\mathcal{R}A(\omega)-\mathcal{R}A(\widehat\omega)
\bigr\|_{\mathcal{X}}^{2}
\qquad
\forall\,\omega,\widehat\omega\in\mathcal K_M.
\]
	
	If an operator is \(\beta\)-cocoercive on a nonempty closed
	convex feasible set and the associated VI has a solution,
	then the projected-forward iteration converges weakly to
	a VI solution for every fixed step size
	\(\nu\in(0,2\beta)\) \cite{bauschke2017convex}.
	Monotonicity alone does not give this guarantee.
	We show that cocoercivity fails in every
	equivalent Hilbert metric due to the affine dependence on the density:
	a density-only variation can alter the transport or market-clearing residual
	while leaving the momentum, the HJB residual, and consequently the
	monotonicity pairing unchanged.

	\begin{proposition}[Density-only obstruction to cocoercivity]
		\label{prop:cont-cocoerc}
		If \(\mathcal K_M\ne\varnothing\), then, for every equivalent
		Hilbert inner product \((\cdot,\cdot)_*\) on \(\mathcal X\),
		the associated Riesz representative \(\mathcal R_*A\) is not
		cocoercive on \(\mathcal K_M\).
	\end{proposition}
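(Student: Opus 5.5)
Fix \(\omega=(m,u,\varpi)\in\mathcal K_M\) and compare it with \(\widehat\omega=(\widehat m,u,\varpi)\), which differs only in the density. Choose \(\widehat m=(1-s)m+s\,c\) with a constant \(c\in[0,M_m]\) and \(s\in(0,1]\), so that \(0\le\widehat m\le M_m\) and \(\widehat\omega\in\mathcal K_M\). The goal is to show that the left-hand side of the cocoercivity inequality vanishes for this pair, while the right-hand side is strictly positive. This contradicts cocoercivity for every \(\beta>0\).

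\textbf{Step 1: the pairing vanishes.} By \eqref{eq:abstract-lift-identity}, which holds for any equivalent inner product with its own Riesz map \(\mathcal R_*\), the cocoercivity pairing equals \(\langle A(\omega)-A(\widehat\omega),\delta\omega\rangle\), where \(\delta\omega=(\delta m,0,0)\). Only the \(\tilde m\)-slot of \eqref{eq:cont-A-weak} is tested, so the pairing is
\[
\int_{\Omega_T}\bigl(R_{\mathrm{HJB}}(\omega)-R_{\mathrm{HJB}}(\widehat\omega)\bigr)\delta m\,dx\,dt .
\]
Since \(R_{\mathrm{HJB}}\) in \eqref{eq:cont-R-HJB} does not depend on \(m\), this difference is zero. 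Equivalently, in \eqref{eq:cont-convex-split} we have \(z=\widehat z\), so every bracket vanishes.

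\textbf{Step 2: the residual difference is nonzero.} Since \(\mathcal R_*\) is an isomorphism, \(\|\mathcal R_*A(\omega)-\mathcal R_*A(\widehat\omega)\|_*>0\) as soon as \(A(\omega)\ne A(\widehat\omega)\) in \(\mathcal X'\). The transport component of this difference is
\[
\tilde u\mapsto\int_{\Omega_T}\delta m\bigl(-\tilde u_t+D_pH(x,\varpi+u_x)\tilde u_x\bigr)\,dx\,dt .
\]
To show it is nonzero, test it with \(\tilde u\in C_c^\infty(\Omega_T)\subset H^1_{\mathrm{term}}\) depending only on \(t\) in the support region. This leaves \(-\int\delta m\,\tilde u_t\), which vanishes for all such \(\tilde u\) only if \(\int_\Omega\delta m\,dx\) is constant in time. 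It is simpler to use the market-clearing component instead, or to choose \(\delta m\) directly: take \(\delta m=s(c-m)\) and pick \(c\) so that \(\delta m\ne0\). Then the transport functional, tested with a general \(\tilde u\in C_c^\infty(\Omega_T)\), vanishes only if \(\delta m\) solves the weak transport equation \(\delta m_t-(\delta m\,D_pH)_x=0\) in \(\mathcal D'\).

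\textbf{Step 3: the main obstacle.} The difficulty is guaranteeing that \(\delta m\) is \emph{not} such a distributional solution for at least one admissible choice. To handle this, use the freedom to pick \(\widehat m\) as any admissible density, not only a convex combination. Take two candidates, \(\widehat m_1=m+s\rho\) and \(\widehat m_2=m+s\rho\zeta(t)\), with \(\rho\ge0\) a small bump and \(\zeta\) nonconstant. If both perturbations solved the homogeneous transport equation, then subtracting gives \(\rho(1-\zeta)\) as a solution, and therefore \(\rho\,\zeta'(t)=0\), which is a contradiction. Signs and bounds are handled as follows: if \(m<M_m\) somewhere, perturb upward on that set; otherwise \(m\equiv M_m>0\) and we perturb downward. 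Either way at least one perturbation is admissible, so \(A(\omega)\ne A(\widehat\omega)\). Combined with Step 1, this gives \(0\ge\beta\|\cdot\|_*^2>0\), a contradiction.
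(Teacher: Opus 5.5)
Your argument is correct once Steps 1 and 3 are combined, but it takes a much longer route than the paper.

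\textbf{The paper's route.} The paper does not fix an arbitrary \(\omega\). Using the product structure of \(\mathcal K_M\), it takes the explicit pair \(\omega=(M_m/2,u,\varpi)\), \(\widehat\omega=(0,u,\varpi)\) and tests the transport component with \(\tilde u(x,t)=T-t\). This function lies in \(H^1_{\mathrm{term}}\) but is not compactly supported in \(\Omega_T\). Since \(\tilde u_x=0\) and \(-\tilde u_t=1\), and the initial-data terms cancel, the pairing is just \(\int_{\Omega_T}\delta m\,dx\,dt=\tfrac{M_m}{2}|\Omega|T>0\).

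\textbf{Where your route diverges.} The ``main obstacle'' in your Step 3 comes from restricting to \(\tilde u\in C_c^\infty(\Omega_T)\). With those test functions the functional only detects the distributional transport equation, so you are forced into the two-candidate linearity argument. Once \(T-t\) is available, any sign-definite nonzero density perturbation gives \(\delta A\neq0\) immediately. For instance, use your \(\widehat m=m+s(M_m-m)\) when \(m\not\equiv M_m\), and \(\widehat m=(1-s)m\) otherwise. The inconclusive exploration in Step 2 and the \(\rho,\rho\zeta\) trick can then both be dropped.

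Your framing, with \(\omega\) fixed and \(s\) small, shows that violating pairs exist arbitrarily close to every point of \(\mathcal K_M\). That is more than the statement needs, and the \(T-t\) test yields it just as easily.

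\textbf{Repairs if you keep your version.}
\begin{enumerate}
\item Do not require \(\rho\) to be a smooth bump. For a general \(L^\infty\) density, e.g.\ \(m=M_m\mathbf 1_E\) with \(E\) and its complement of positive measure in every open set, no nonzero smooth \(\rho\ge0\) fits under \(M_m-m\) or under \(m\). Smoothness is never used in the identity \((\rho\zeta)_t=\zeta\rho_t+\zeta'\rho\) in \(\mathcal D'\). So take \(\rho=M_m-m\), or use a downward perturbation with \(\rho=M_m\) when \(m\equiv M_m\).
\item Replace ``\(\zeta\) nonconstant'' by \(0\le\zeta\le1\) with \(\zeta'\neq0\) on the time projection of \(\operatorname{supp}\rho\), e.g.\ \(\zeta(t)=t/T\). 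This makes \(\widehat m_2\) admissible and ensures \(\rho\zeta'\not\equiv0\).
\item Fix the sign: \(\delta m=m-\widehat m=s(m-c)\).
\end{enumerate}
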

	
	\begin{proof}
		Choose feasible \(u,\varpi\). By the product structure of
		\(\mathcal K_M\), the states
		\[
		\omega=(M_m/2,u,\varpi),
		\qquad
		\widehat\omega=(0,u,\varpi)
		\]
		are feasible. Set \(\delta A=A(\omega)-A(\widehat\omega)\).
		Since the HJB residual is independent of \(m\),
		\[
		\langle\delta A,\omega-\widehat\omega\rangle=0.
		\]
		On the other hand, testing the transport component with
		\(\tilde u(x,t)=T-t\) gives
		\[
		\langle\delta A,(0,T-t,0)\rangle
		=\frac{M_m}{2}|\Omega|T>0,
		\]
		so \(\delta A\ne0\).
		
		For any equivalent Hilbert inner product, the Riesz identity
		and injectivity of \(\mathcal R_*\) therefore give
		\[
		(\mathcal R_*\delta A,\omega-\widehat\omega)_*=0,
		\qquad
		\|\mathcal R_*\delta A\|_*>0.
		\]
		This contradicts cocoercivity for every \(\beta>0\).
	\end{proof}

	\section{Discrete Residual Operator}
	\label{sec:discrete}

	From this point on, we restrict to the quadratic Hamiltonian
	\[
	H(x,p)=\frac{p^2}{2}.
	\]
	The residual is derived from a discrete saddle functional, with the
	transport operator defined as the algebraic adjoint of the linearized
	monotone upwind Hamiltonian. Throughout, the subscript \(\Delta\) denotes
	dependence on the grid \((\Delta x,\Delta t)\).
	
	\subsection{Grid and degrees of freedom}
	\label{sec:grid}
	
	Let \([a,b]\) be chosen as in the truncation argument of
	Section~\ref{sec:truncation-price}. We use the uniform grid
	\[
	\Dx=\frac{b-a}{I},\qquad
	\Dt=\frac{T}{J},\qquad
	x_i=a+i\Dx,\qquad
	t_j=j\Dt,
	\]
	with \(i=0,\dots,I\) and \(j=0,\dots,J\). 
	The terminal value is prescribed by
	\[
	u_{i,J}=\bar u_i:=\bar u(x_i),
	\qquad
	i=0,\dots,I.
	\]
	We also choose nonnegative discrete initial-density data
	\((\bar m_i)_{i=0}^{I}\), consistently approximating \(\bar m\) and
	normalized with respect to the discrete spatial pairing:
	\[
	\bar m_i\ge0,
	\qquad
	\Dx\sum_{i=0}^{I}\bar m_i=1.
	\]
	The values \(\bar m_i\) are data and
	enter through the initial boundary term in the discrete saddle
	functional.
	
	The discrete unknowns are
	\[
	\begin{aligned}
		&u_{i,j},
		&&i=0,\dots,I,\quad j=0,\dots,J-1,\\
		&m_{i,j},
		&&i=0,\dots,I,\quad j=1,\dots,J,\\
		&\varpi_j,
		&&j=0,\dots,J-1.
	\end{aligned}
	\]
	Let \(\Xh\) be the corresponding direction space of triples
	\(\tilde\omega_\Delta=(\tilde m,\tilde u,\tilde\varpi)\) on these free
	degrees of freedom. We set \(\tilde u_{\cdot,J}=0\); there is no density
	direction at \(j=0\).
	
	Choose any grid extension of the terminal cost, still denoted by
	\(\bar u\), with \(\bar u_{i,J}=\bar u_i\), and set
	\[
	\bar\omega_\Delta=(0,\bar u,0).
	\]
	The affine state space is \(\bar\omega_\Delta+\Xh\).
	
	\subsection{Monotone upwind Hamiltonian}
	\label{sec:ham}
	
	For \(j=0,\dots,J-1\), define
	\[
	D_+u_{i,j}
	=
	\frac{u_{i+1,j}-u_{i,j}}{\Dx},
	\qquad
	0\le i\le I-1,
	\]
	and
	\[
	D_-u_{i,j}
	=
	\frac{u_{i,j}-u_{i-1,j}}{\Dx},
	\qquad
	1\le i\le I.
	\]
	We use the monotone upwind numerical Hamiltonian
	\begin{equation}\label{eq:godunov}
		F(p,q,w)
		=
		\frac12\bigl(\max\{p-w,0\}\bigr)^2
		+
		\frac12\bigl(\max\{q+w,0\}\bigr)^2 .
	\end{equation}
	The map \(F\) is continuously differentiable and jointly convex in
	\((p,q,w)\). It is consistent with the quadratic Hamiltonian, since
	\[
	F(-s,s,w)=\frac12(s+w)^2
	\qquad
	\text{for all }s,w\in\R.
	\]
	Its partial derivatives are
	\begin{gather*}
		\partial_pF(p,q,w)=\max\{p-w,0\},
		\qquad
		\partial_qF(p,q,w)=\max\{q+w,0\},
		\\
		\partial_wF(p,q,w)=-\max\{p-w,0\}+\max\{q+w,0\}.
	\end{gather*}
	At the interior nodes, define the discrete Hamilton--Jacobi operator by
	\begin{equation}\label{eq:disc-HJB}
		G_{i,j}(u,\varpi)
		=
		F\bigl(
		-D_+u_{i,j},
		D_-u_{i,j},
		\varpi_j
		\bigr),
		\qquad
		1\le i\le I-1,\quad j=0,\dots,J-1.
	\end{equation}
	At the spatial boundary, we use the reduced numerical Hamiltonians
	obtained by retaining only the available spatial difference:
	\[
	G_{0,j}(u,\varpi)
	=
	\frac12
	\bigl(
	\max\{-D_+u_{0,j}-\varpi_j,0\}
	\bigr)^2,
	\]
	and
	\[
	G_{I,j}(u,\varpi)
	=
	\frac12
	\bigl(
	\max\{D_-u_{I,j}+\varpi_j,0\}
	\bigr)^2.
	\]
	These \(C^1\), convex reductions define the boundary derivatives of
	\(G\). At \(x=a\) and \(x=b\), they keep only the one-sided term whose
	feedback velocity points into \(\Omega\), and thus provide the
	state-constraint closure of the HJB discretization.

	\subsection{Discrete saddle functional and residuals}
	\label{sec:disc-residuals}
	
	Let \(Q_j=Q(t_j)\). For
	\(\omega_\Delta=(m,u,\varpi)\in \bar\omega_\Delta+X_\Delta\), define the
	discrete saddle functional
	\begin{equation}\label{eq:disc-saddle-functional}
		\mathcal J_\Delta(m,u,\varpi)
		=
		\Dx\Dt
		\sum_{j=0}^{J-1}\sum_{i=0}^{I}
		m_{i,j+1}
		\left[
		G_{i,j}(u,\varpi)
		-
		\frac{u_{i,j+1}-u_{i,j}}{\Dt}
		\right]
		-\Dx\sum_{i=0}^{I}\bar m_i u_{i,0}
		+
		\Dt\sum_{j=0}^{J-1}Q_j\varpi_j .
	\end{equation}
	We write \(m_{i,0}:=\bar m_i\); these values are data, not unknowns.
	The right-endpoint weight \(m_{\cdot,j+1}\) produces the forward time
	difference in the transport residual.
	
	We define the discrete residual \(A_\Delta\) as the saddle-gradient
	\begin{equation}\label{eq:Ad-saddle-gradient}
		A_\Delta(m,u,\varpi)
		=
		\bigl(
		-D_m\mathcal J_\Delta(m,u,\varpi),\,
		D_u\mathcal J_\Delta(m,u,\varpi),\,
		D_\varpi\mathcal J_\Delta(m,u,\varpi)
		\bigr).
	\end{equation}
	The derivatives are covectors; the residual arrays below represent them in the weighted duality pairing~\eqref{eq:disc-duality}. For
	\(\tilde\omega_\Delta=(\tilde m,\tilde u,\tilde\varpi)\in\Xh\), its
	dual action is
	\begin{align}
		\langle A_\Delta(\omega_\Delta),\tilde\omega_\Delta\rangle_\Delta
		&=
		\Dx\Dt\sum_{j=0}^{J-1}\sum_{i=0}^{I}
		R_{\Delta,\mathrm{HJB}}(\omega_\Delta)_{i,j}\,
		\tilde m_{i,j+1}
		\notag\\
		&\quad+
		\Dx\Dt\sum_{j=0}^{J-1}\sum_{i=0}^{I}
		R_{\Delta,\mathrm{tr}}(\omega_\Delta)_{i,j}\,
		\tilde u_{i,j}
		+
		\Dt\sum_{j=0}^{J-1}
		R_{\Delta,\mathrm{mc}}(\omega_\Delta)_j\,
		\tilde\varpi_j .
		\label{eq:disc-duality}
	\end{align}
	
	The density component gives the HJB residual
	\begin{equation}\label{eq:disc-Ru}
		R_{\Delta,\mathrm{HJB}}(\omega_\Delta)_{i,j}
		=
		\frac{u_{i,j+1}-u_{i,j}}{\Dt}
		-
		G_{i,j}(u,\varpi),
		\qquad
		i=0,\dots,I,\quad j=0,\dots,J-1.
	\end{equation}
	
	To write the value-function component, let \(L_j[u,\varpi]\) be the derivative of \(G_{\cdot,j}\) with respect
	to \(u_{\cdot,j}\). At interior nodes,
	\begin{equation}\label{eq:G-linearization}
		L_j[u,\varpi](v)_i
		=
		-\partial_pF_{i,j}\,D_+v_i
		+
		\partial_qF_{i,j}\,D_-v_i,
		\qquad 1\le i\le I-1,
	\end{equation}
	where
	\[
	\partial_pF_{i,j}
	=
	\partial_pF(-D_+u_{i,j},D_-u_{i,j},\varpi_j),
	\qquad
	\partial_qF_{i,j}
	=
	\partial_qF(-D_+u_{i,j},D_-u_{i,j},\varpi_j).
	\]
	At the boundary, set
	\[
	\partial_pF_{0,j}
	=
	\max\{-D_+u_{0,j}-\varpi_j,0\},
	\qquad
	\partial_qF_{I,j}
	=
	\max\{D_-u_{I,j}+\varpi_j,0\}.
	\]
	The reduced-Hamiltonian convention then gives
	\[
	L_j[u,\varpi](v)_0
	=
	-\partial_pF_{0,j}\,D_+v_0,
	\qquad
	L_j[u,\varpi](v)_I
	=
	\partial_qF_{I,j}\,D_-v_I.
	\]
	
	The algebraic adjoint \(L_j[u,\varpi]^*\) is defined by
	\begin{equation}\label{eq:adjoint-identity}
		\Dx\sum_{i=0}^{I}
		\rho_i\,L_j[u,\varpi](v)_i
		=
		\Dx\sum_{i=0}^{I}
		L_j[u,\varpi]^*(\rho)_i\,v_i
	\end{equation}
	for all spatial grid functions \(\rho\) and \(v\). Explicitly,
	\begin{equation}\label{eq:adj-transport}
		L_j[u,\varpi]^*(\rho)_i
		=
		\frac{1}{\Dx}\Bigl[
		\rho_i(\partial_pF_{i,j}+\partial_qF_{i,j})
		-\rho_{i-1}\partial_pF_{i-1,j}
		-\rho_{i+1}\partial_qF_{i+1,j}
		\Bigr],
		\qquad 1\le i\le I-1,
	\end{equation}
	with boundary rows
	\[
	L_j[u,\varpi]^*(\rho)_0
	=
	\frac{1}{\Dx}
	\Bigl[
	\rho_0\partial_pF_{0,j}
	-\rho_1\partial_qF_{1,j}
	\Bigr],
	\]
	and
	\[
	L_j[u,\varpi]^*(\rho)_I
	=
	\frac{1}{\Dx}
	\Bigl[
	\rho_I\partial_qF_{I,j}
	-\rho_{I-1}\partial_pF_{I-1,j}
	\Bigr].
	\]
	Since \(L_j[u,\varpi]\) annihilates constants,
	\begin{equation}\label{eq:Lstar-sum-zero}
		\sum_{i=0}^{I}L_j[u,\varpi]^*(\rho)_i=0.
	\end{equation}
	
	Differentiating \(\mathcal J_\Delta\) with respect to the free
	value-function entries, and using \(m_{\cdot,0}:=\bar m\), gives
	the transport residual
	\begin{equation}\label{eq:disc-Rm}
		R_{\Delta,\mathrm{tr}}(\omega_\Delta)_{i,j}
		=
		\frac{m_{i,j+1}-m_{i,j}}{\Dt}
		+
		L_j[u,\varpi]^*(m_{\cdot,j+1})_i,
		\qquad
		i=0,\dots,I,\quad j=0,\dots,J-1.
	\end{equation}
	By \eqref{eq:Lstar-sum-zero}, discrete mass is conserved whenever
	the transport residual vanishes.
	
	Finally, differentiating with respect to the price gives
	\begin{equation}\label{eq:disc-Rw}
		R_{\Delta,\mathrm{mc}}(\omega_\Delta)_j
		=
		\Dx\sum_{i=0}^{I}
		m_{i,j+1}\,\partial_\varpi G_{i,j}(u,\varpi)
		+
		Q_j,
		\qquad j=0,\dots,J-1.
	\end{equation}
	At interior nodes,
	\[
	\partial_\varpi G_{i,j}(u,\varpi)
	=
	\partial_wF\bigl(-D_+u_{i,j},D_-u_{i,j},\varpi_j\bigr),
	\]
	with derivatives of the reduced Hamiltonians used at \(i=0,I\).

	\subsection{Admissible sets}
	\label{sec:admissible}

	The basic admissible set is the convex subset of the affine state space

	\begin{equation}\label{eq:KDelta}
		K_\Delta
		=
		\Bigl\{
		\omega_\Delta\in\bar\omega_\Delta+\Xh:
		m_{i,j}\ge0
		\quad
		\forall\,i=0,\dots,I,\;\;j=1,\dots,J
		\Bigr\}.
	\end{equation}
	Only nonnegativity of the density is imposed at this level; no
	time-slice mass constraint is included in \(K_\Delta\).
	
	For the convergence argument, we use the bounded admissible set
	\begin{equation}\label{eq:KDeltaM}
		K_{\Delta,M}
		=
		\left\{
		\omega_\Delta\in K_\Delta:
		\begin{aligned}
			&0\le m_{i,j}\le M_m
			&&\forall\, i,\;\; j=1,\dots,J,\\
			&|u_{i,j}|\le M_u
			&&\forall\, i,\;\; j=0,\dots,J-1,\\
			&|\varpi_j|\le M_\varpi
			&&\forall\, j=0,\dots,J-1
		\end{aligned}
		\right\}.
	\end{equation}
	Here
	\(M=(M_m,M_u,M_\varpi)\in(0,\infty)^3\) is fixed independently of
	the grid. Note that, unlike the continuous triple in \eqref{eq:KM-cont}, whose middle entry bounds the slope, here the middle entry bounds the value function itself; a discrete slope bound enters separately through \(M_g\) below.
	
	For the mesh-independent Lipschitz estimate, we further restrict to
	\begin{equation}\label{eq:KDeltaMg}
		K_{\Delta,M,g}
		=
		\left\{
		\omega_\Delta\in K_{\Delta,M}:
		|D_+u_{i,j}|\le M_g
		\quad\text{for }i=0,\dots,I-1,\quad j=0,\dots,J-1
		\right\}.
	\end{equation}
	Since \(D_-u_{i,j}=D_+u_{i-1,j}\), this also bounds every backward difference. Here \(M_g\) is independent of \(\Dx\) and \(\Dt\). 
	Proposition~\ref{prop:lip} gives a mesh-independent Lipschitz
	bound on \(K_{\Delta,M,g}\) and a fixed-grid bound on
	\(K_{\Delta,M}\).

	\subsection{Discrete monotonicity}
	\label{sec:disc-mono}
	
	\begin{proposition}[Discrete monotonicity]
		\label{prop:disc-mono}
		\(A_\Delta\) is monotone on \(K_\Delta\) in the discrete duality
		pairing: for every
		\(\omega_\Delta,\widehat\omega_\Delta\in K_\Delta\),
		\begin{equation}\label{eq:disc-mono}
			\bigl\langle
			A_\Delta(\omega_\Delta)-A_\Delta(\widehat\omega_\Delta),
			\omega_\Delta-\widehat\omega_\Delta
			\bigr\rangle_\Delta
			\ge 0 .
		\end{equation}
	\end{proposition}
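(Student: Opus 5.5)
The plan is to apply Lemma~\ref{lem:saddle-gradient-monotone} to the discrete saddle functional \eqref{eq:disc-saddle-functional}. We work in the translated coordinates around \(\bar\omega_\Delta\). The variables are grouped as \(x=m\) (free densities \(m_{\cdot,j}\), \(j\ge1\)) and \(y=(u,\varpi)\) (free value-function and price entries). The admissible set \(K_\Delta\) is a product: the set \(\{m\ge0\}\) times the whole affine space in \((u,\varpi)\). Both factors are convex, so the product hypothesis of the lemma holds. \(\mathcal J_\Delta\) is a finite-dimensional function of finitely many variables. Each \(G_{i,j}\) is \(C^1\): this holds for \(F\) in \eqref{eq:godunov} and for the reduced boundary Hamiltonians, since \(s\mapsto\frac12(\max\{s,0\})^2\) is \(C^1\). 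Hence \(\mathcal J_\Delta\) is Fréchet differentiable on all of \(\bar\omega_\Delta+\Xh\), which is open in itself.

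The first step is concavity in \(m\). For fixed \((u,\varpi)\), \(\mathcal J_\Delta\) is affine in \(m\): the data term \(-\Dx\sum\bar m_iu_{i,0}\) does not involve the free densities, and the bracket in the double sum is independent of \(m\). It is therefore concave on \(\{m\ge0\}\).

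The second step is convexity in \((u,\varpi)\) for fixed \(m\ge0\). The price term \(\Dt\sum Q_j\varpi_j\) and the terms \(-m_{i,j+1}(u_{i,j+1}-u_{i,j})/\Dt\) and \(-\Dx\bar m_iu_{i,0}\) are linear in \((u,\varpi)\). The remaining contribution is \(\Dx\Dt\sum m_{i,j+1}G_{i,j}(u,\varpi)\). At interior nodes, \(G_{i,j}\) is the composition of the jointly convex map \(F\) with the linear map \((u,\varpi)\mapsto(-D_+u_{i,j},D_-u_{i,j},\varpi_j)\), so it is convex. At \(i=0,I\), \(G_{i,j}\) is \(\frac12(\max\{\ell,0\})^2\) with \(\ell\) linear in \((u,\varpi)\), again convex, because \(s\mapsto\frac12(\max\{s,0\})^2\) is convex and nondecreasing. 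Multiplying by nonnegative weights \(m_{i,j+1}\) preserves convexity, so the whole functional is convex in \(y\).

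The lemma then gives \(\langle A_\Delta(\omega_\Delta)-A_\Delta(\widehat\omega_\Delta),\omega_\Delta-\widehat\omega_\Delta\rangle\ge0\) in the Euclidean pairing of derivatives with increments. The only point needing care is the identification of this pairing with \(\langle\cdot,\cdot\rangle_\Delta\) in \eqref{eq:disc-duality}. The residual arrays \eqref{eq:disc-Ru}, \eqref{eq:disc-Rm}, \eqref{eq:disc-Rw} are defined precisely so that \(\langle A_\Delta(\omega_\Delta),\tilde\omega_\Delta\rangle_\Delta\) equals the directional derivative \(-D_m\mathcal J_\Delta[\tilde m]+D_u\mathcal J_\Delta[\tilde u]+D_\varpi\mathcal J_\Delta[\tilde\varpi]\). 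The density row gives \(-\Dx\Dt\sum(G-(u_{j+1}-u_j)/\Dt)\tilde m_{j+1}\). The \(u\)-variation, via summation by parts in \(j\) (with \(\tilde u_{\cdot,J}=0\) and \(m_{\cdot,0}=\bar m\)) and the adjoint identity \eqref{eq:adjoint-identity}, gives the transport row. The \(\varpi\)-variation gives the market-clearing row. I expect this bookkeeping to be the main obstacle. In particular, the sum by parts in time must correctly absorb the initial term \(-\Dx\sum\bar m_iu_{i,0}\) into the \(j=0\) difference quotient, and the boundary rows of \(L_j^*\) must match the reduced Hamiltonians. Once the identification holds, the proposition follows directly from the lemma.
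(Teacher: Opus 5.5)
Your proposal is correct and follows the paper's proof: both apply Lemma~\ref{lem:saddle-gradient-monotone} to \(\mathcal J_\Delta\) on the product set \(K_\Delta\). The two inputs are that \(\mathcal J_\Delta\) is affine in \(m\), and that for \(m\ge0\) it is convex in \((u,\varpi)\), since every \(G_{i,j}\), including the reduced boundary Hamiltonians, is \(C^1\) and convex. Your extra check that the residual arrays represent the saddle-gradient in \(\langle\cdot,\cdot\rangle_\Delta\) is bookkeeping the paper leaves implicit, and it does go through: summation by parts in time with \(m_{\cdot,0}=\bar m\) and \(\tilde u_{\cdot,J}=0\), together with the adjoint identity~\eqref{eq:adjoint-identity}, reproduces~\eqref{eq:disc-Rm}.
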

	
	\begin{proof}
		Each numerical Hamiltonian \(G_{i,j}\) is \(C^1\) and convex in
		\((u_{\cdot,j},\varpi_j)\), including at the boundary nodes.
		After fixing the terminal data, \(K_\Delta\)
		is the product of the
		nonnegative density cone with the value-function and price spaces. For
		\(m\ge0\), \(\mathcal J_\Delta\) is convex in \((u,\varpi)\), and affine, hence concave, in \(m\). Therefore
		\(A_\Delta=(-D_m\mathcal J_\Delta,D_u\mathcal J_\Delta,
		D_\varpi\mathcal J_\Delta)\) is monotone by
		Lemma~\ref{lem:saddle-gradient-monotone}.
	\end{proof}
	
	Analogous results apply to \(H(x,p)=p^2/2-V(x)\)
	if \(G_{i,j}\) is replaced by \(G_{i,j}-V(x_i)\).
	This changes the HJB residual by a fixed term and leaves
	the monotonicity and Lipschitz estimates in the upcoming section unchanged.
		
	\section{Discrete Riesz Operator and Lipschitz Estimate}
	\label{sec:analysis}
	
	We equip \(X_\Delta\) with a mixed \(L^2\)--Sobolev metric,
	identify the corresponding Riesz representative of \(A_\Delta\), and
	prove a mesh-independent Lipschitz estimate on
	\(K_{\Delta,M,g}\).
	
	\subsection{Discrete inner product and Riesz representative}
	\label{sec:disc-riesz}
	
	On the direction space \(X_\Delta\) of
	Section~\ref{sec:grid}, we use weighted discrete \(L^2\) norms in the
	density and price directions and a space-time \(H^1\) norm in the
	value-function direction, with \(\phi_{\cdot,J}=0\).
	
	For the density, set
	\begin{equation}\label{eq:norm-m}
		\|\psi\|_{m,\Delta}^2
		=
		\Dx\Dt
		\sum_{j=1}^{J}\sum_{i=0}^{I}
		|\psi_{i,j}|^2 .
	\end{equation}
	For the value function, set
	\begin{equation}\label{eq:norm-u}
		\|\phi\|_{u,\Delta}^2
		=
		\Dx\Dt\sum_{j=1}^{J}\sum_{i=0}^{I}
		\left|\frac{\phi_{i,j}-\phi_{i,j-1}}{\Dt}\right|^2
		+
		\Dx\Dt\sum_{j=0}^{J-1}\sum_{i=0}^{I}|\phi_{i,j}|^2
		+
		\Dx\Dt\sum_{j=0}^{J-1}\sum_{i=0}^{I-1}
		\left|\frac{\phi_{i+1,j}-\phi_{i,j}}{\Dx}\right|^2 .
	\end{equation}
	For the price, set
	\begin{equation}\label{eq:norm-w}
		\|\chi\|_{\varpi,\Delta}^2
		=
		\Dt\sum_{j=0}^{J-1}|\chi_j|^2 .
	\end{equation}
	The Hilbert inner product on \(X_\Delta\) is 
	\begin{equation}\label{eq:disc-ip}
		(\omega_\Delta,\tilde\omega_\Delta)_{X_\Delta}
		=
		(m,\tilde m)_{m,\Delta}
		+
		(u,\tilde u)_{u,\Delta}
		+
		(\varpi,\tilde\varpi)_{\varpi,\Delta},
	\end{equation}
	where the three inner products correspond to the norms
	\eqref{eq:norm-m}--\eqref{eq:norm-w} and
	\(\|\omega_\Delta\|_{X_\Delta}^2=(\omega_\Delta,\omega_\Delta)_{X_\Delta}\).
	
	The density and price directions have identity Riesz maps. Let
	\(\Ltt{T}+\Lxx\) be the symmetric positive-definite operator induced by
	the value-function norm, namely
	\[
	\Dx\Dt\sum_{j=0}^{J-1}\sum_{i=0}^{I}
	\bigl((\Ltt{T}+\Lxx)\phi\bigr)_{i,j}\tilde\phi_{i,j}
	=
	(\phi,\tilde\phi)_{u,\Delta}
	\]
	for all value directions \(\phi,\tilde\phi\). 
	Equivalently, with \(\phi_{i,J}=0\), these operators are
	\[
	(\Ltt{T}\phi)_{i,j}
	=
	\begin{cases}
		\displaystyle
		\phi_{i,0}
		+\frac{\phi_{i,0}-\phi_{i,1}}{\Dt^2},
		& j=0,\\[3mm]
		\displaystyle
		\phi_{i,j}
		+\frac{2\phi_{i,j}-\phi_{i,j-1}-\phi_{i,j+1}}{\Dt^2},
		& 1\le j\le J-1,
	\end{cases}
	\]
	for \(i=0,\dots,I\), and
	\[
	(\Lxx\phi)_{i,j}
	=
	\begin{cases}
		\displaystyle
		\frac{\phi_{0,j}-\phi_{1,j}}{\Dx^2},
		& i=0,\\[3mm]
		\displaystyle
		\frac{2\phi_{i,j}-\phi_{i-1,j}-\phi_{i+1,j}}{\Dx^2},
		& 1\le i\le I-1,\\[3mm]
		\displaystyle
		\frac{\phi_{I,j}-\phi_{I-1,j}}{\Dx^2},
		& i=I,
	\end{cases}
	\]
	for \(j=0,\dots,J-1\).
	Thus \(\Ltt{T}\) includes the value-function \(L^2\) term and the
	natural temporal boundary row at \(j=0\); \(\Lxx\) has
	homogeneous Neumann boundary rows.
	
	Therefore, the Riesz operator associated with \eqref{eq:disc-ip} is
	the block-diagonal map
	\(\mathcal{R}_\Delta:X_\Delta'\to X_\Delta\),
	\begin{equation}\label{eq:Rh-def}
		\mathcal{R}_\Delta
		=\operatorname{diag}\!\bigl(
		I,\,
		(\Ltt{T}+\Lxx)^{-1},\,
		I
		\bigr),
	\end{equation}
	characterized by
	\begin{equation}\label{eq:riesz-prop}
		(\mathcal{R}_\Delta f,\tilde\omega_\Delta)_{X_\Delta}
		=\langle f,\tilde\omega_\Delta\rangle_\Delta
		\qquad
		\forall\, f\in X_\Delta',\ \forall\,\tilde\omega_\Delta\in X_\Delta ,
	\end{equation}
	where \(\langle\cdot,\cdot\rangle_\Delta\) is the duality pairing
	defined in \eqref{eq:disc-duality}.
	
	Applying \(\mathcal{R}_\Delta\) to the discrete residual \(A_\Delta\) of
	Section~\ref{sec:discrete} gives the Riesz representative
	\begin{equation}\label{eq:RhAh-def}
		\mathcal{R}_\Delta A_\Delta(\omega_\Delta)
		=\bigl(
		R_{\Delta,\mathrm{HJB}},\,
		(\Ltt{T}+\Lxx)^{-1}R_{\Delta,\mathrm{tr}},\,
		R_{\Delta,\mathrm{mc}}
		\bigr).
	\end{equation}
	The assignment follows from \eqref{eq:disc-duality}. In
	particular, the HJB residual is identified with the density levels
	through the shift \(j\mapsto j+1\), while the transport and
	market-clearing residuals retain the value-function and price
	indexing.
	
	By Proposition~\ref{prop:disc-mono} and
	\eqref{eq:abstract-lift-identity},
	\(\mathcal R_\Delta A_\Delta\) is monotone on \(K_\Delta\) in the
	metric~\eqref{eq:disc-ip}. 
	The metric affects the Riesz representative, its Lipschitz constant,
	and the associated projection, but not the underlying
	duality-based monotonicity.

	\subsection{Lipschitz continuity on \(K_{\Delta,M,g}\)}
	\label{sec:lip}
	
	By the Riesz isometry
	\(\|\mathcal R_\Delta f\|_{X_\Delta}=\|f\|_{X_\Delta'}\),
	it suffices to estimate the three residuals in the dual product norm.
	We retain explicit constants to justify the step-size choice in
	Section~\ref{sec:num-iterations}.
	The
	HJB time difference is controlled by the value-function norm;
	summation by parts and \eqref{eq:adjoint-identity} transfer the temporal
	and spatial differences in the transport residual to its test
	direction.
	
	On \(K_{\Delta,M,g}\), the density, slope, and price bounds are
	independent of the mesh. We use the discrete spatial norm
	\[
	\|v\|_{\ell_x^2}^2
	=
	\Dx\sum_i|v_i|^2.
	\]
	If \(0\le\lambda_i\le M_m\), then, since
	\(\Dx\sum_i 1=(I+1)\Dx\le2|\Omega|\),
	\begin{equation}\label{eq:box-estimates}
		\begin{aligned}
			\Dx\sum_i\lambda_i|v_i|^2
			&\le M_m\|v\|_{\ell_x^2}^2,\\
			\Dx\sum_i\lambda_i|v_i|
			&\le M_m\sqrt{2|\Omega|}\,\|v\|_{\ell_x^2}.
		\end{aligned}
	\end{equation}

	 The product structure gives
	\begin{equation}\label{eq:dual-norm-split}
		\|A_\Delta(\omega_\Delta)-A_\Delta(\widehat\omega_\Delta)\|_{X_\Delta'}^2
		=
		\|\delta R_{\Delta,\mathrm{HJB}}\|_{(m,\Delta)'}^2
		+
		\|\delta R_{\Delta,\mathrm{tr}}\|_{(u,\Delta)'}^2
		+
		\|\delta R_{\Delta,\mathrm{mc}}\|_{\varpi,\Delta}^2,
	\end{equation}
	where \(\delta R_{\Delta,\bullet}=R_{\Delta,\bullet}(\omega_\Delta)
	-R_{\Delta,\bullet}(\widehat\omega_\Delta)\), and \(\|\cdot\|_{(m,\Delta)'}\),
	\(\|\cdot\|_{(u,\Delta)'}\) are the dual norms of \(\|\cdot\|_{m,\Delta}\)
	and \(\|\cdot\|_{u,\Delta}\). For the density, \(\|\cdot\|_{(m,\Delta)'}\) is the discrete
	space-time \(L^2\) norm, with the index shift \(j\mapsto j+1\).
	
	\begin{proposition}[Mesh-independent Lipschitz continuity]
		\label{prop:lip}
		Let \(K_{\Delta,M,g}\) be the nonempty refined admissible set
		\eqref{eq:KDeltaMg}, and let \(\mathcal R_\Delta\) be the mixed \(L^2\)--Sobolev Riesz
		map~\eqref{eq:Rh-def}. Then
		\begin{equation}\label{eq:lip}
			\|\mathcal{R}_\Delta A_\Delta(\omega_\Delta)
			-\mathcal{R}_\Delta A_\Delta(\widehat\omega_\Delta)\|_{X_\Delta}
			\le
			L_\Delta\,\|\omega_\Delta-\widehat\omega_\Delta\|_{X_\Delta}
			\qquad
			\forall\,\omega_\Delta,\widehat\omega_\Delta\in K_{\Delta,M,g},
		\end{equation}
		where \(L_\Delta\le L\) for a constant
		\(L=L(M_m,M_g,M_\varpi,|\Omega|)\) independent of
		\(\Dx\) and \(\Dt\).
		On \(K_{\Delta,M}\), \(\mathcal R_\Delta A_\Delta\) is Lipschitz on
		each fixed grid. 
	\end{proposition}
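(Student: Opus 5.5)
By the Riesz isometry and the product splitting \eqref{eq:dual-norm-split}, it suffices to bound the three dual norms of \(\delta R_{\Delta,\mathrm{HJB}}\), \(\delta R_{\Delta,\mathrm{tr}}\) and \(\delta R_{\Delta,\mathrm{mc}}\) by multiples of \(\|\delta\omega_\Delta\|_{X_\Delta}\), where \(\delta\omega_\Delta=(\delta m,\delta u,\delta\varpi)\). The basic pointwise facts will be three. First, the maps \(s\mapsto\max\{s,0\}\) and \(s\mapsto\frac12\max\{s,0\}^2\) are \(1\)-Lipschitz and \(C^{1,1}\), respectively. Second, on \(K_{\Delta,M,g}\) all arguments \(-D_+u\pm\varpi\), \(D_-u+\varpi\) lie in \([-(M_g+M_\varpi),M_g+M_\varpi]\). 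Third, the coefficients \(\partial_pF_{i,j},\partial_qF_{i,j}\) are nonnegative, bounded by \(M_g+M_\varpi\), and Lipschitz in \((D_\pm u,\varpi)\) with constant \(1\).

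\textbf{HJB block.} Its dual norm is the discrete \(L^2\) norm over \(j=0,\dots,J-1\). The time difference \((\delta u_{i,j+1}-\delta u_{i,j})/\Dt\) is bounded in \(\ell^2\) by the first term of \(\|\delta u\|_{u,\Delta}\), after reindexing and using \(\delta u_{\cdot,J}=0\). By the mean value theorem and the bounds above, \(|\delta G_{i,j}|\le (M_g+M_\varpi)(|\delta D_+u_{i,j}|+|\delta D_-u_{i,j}|+2|\delta\varpi_j|)\). Summing, we get
\[
\|\delta R_{\Delta,\mathrm{HJB}}\|\le C\bigl(\|\delta u\|_{u,\Delta}+\sqrt{|\Omega|}\,\|\delta\varpi\|_{\varpi,\Delta}\bigr).
\]
The factor \((I+1)\Dx\le2|\Omega|\) enters only through the constant price perturbation. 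The boundary rows are handled the same way.

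\textbf{Market-clearing block.} We write
\[
\delta R_{\Delta,\mathrm{mc},j}=\Dx\sum_i\delta m_{i,j+1}\,\partial_\varpi G_{i,j}+\Dx\sum_i \widehat m_{i,j+1}\,\delta(\partial_\varpi G_{i,j}).
\]
The first term is bounded by Cauchy--Schwarz using \(|\partial_\varpi G|\le 2(M_g+M_\varpi)\) and \(\Dx\sum_i1\le2|\Omega|\). For the second term we use \(0\le\widehat m\le M_m\), the second estimate in \eqref{eq:box-estimates}, and the \(1\)-Lipschitz bound on \(\partial_\varpi G\). Squaring and summing over \(j\) with weight \(\Dt\) gives control by \(\|\delta m\|_{m,\Delta}\), \(\|\delta u\|_{u,\Delta}\) (through the spatial-difference term) and \(\|\delta\varpi\|_{\varpi,\Delta}\). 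This is the only place the upper density bound \(M_m\) is needed.

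\textbf{Transport block.} This is the main obstacle, since \(R_{\Delta,\mathrm{tr}}\) contains a \(1/\Dt\) time difference and a \(1/\Dx\) adjoint difference, so pointwise bounds would produce inverse mesh factors. I would never estimate it pointwise. Instead, for a test direction \(\tilde u\) with \(\tilde u_{\cdot,J}=0\), I would write \(\langle\delta R_{\Delta,\mathrm{tr}},\tilde u\rangle\) and first use summation by parts in time:
\[
\Dx\Dt\sum_{j}\frac{\delta m_{i,j+1}-\delta m_{i,j}}{\Dt}\tilde u_{i,j}=-\Dx\Dt\sum_{j}\delta m_{i,j+1}\frac{\tilde u_{i,j+1}-\tilde u_{i,j}}{\Dt}.
\]
This uses \(\delta m_{\cdot,0}=0\), because the initial data are fixed, and \(\tilde u_{\cdot,J}=0\), so no boundary terms remain. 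The result is bounded by \(\|\delta m\|_{m,\Delta}\|\tilde u\|_{u,\Delta}\).

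Next I would use the adjoint identity \eqref{eq:adjoint-identity} to move \(L_j^*\) onto \(\tilde u\):
\[
\Dx\sum_i\bigl(m_{i,j+1}L_j[u,\varpi](\tilde u)_i-\widehat m_{i,j+1}L_j[\widehat u,\widehat\varpi](\tilde u)_i\bigr).
\]
I would split this into \(\delta m\,L_j[u,\varpi](\tilde u)\) plus \(\widehat m\,(L_j[u,\varpi]-L_j[\widehat u,\widehat\varpi])(\tilde u)\). Here \(L_j(\tilde u)\) involves only \(D_\pm\tilde u\) multiplied by coefficients that are bounded by \(M_g+M_\varpi\) and Lipschitz in \((D_\pm u,\varpi)\). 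The first piece is bounded by \((M_g+M_\varpi)\|\delta m\|\,\|D_+\tilde u\|\). For the second piece, the neat \(L^2\)–\(L^2\) pairing would require \(L^\infty\) control of \(D_\pm\tilde u\), which is unavailable. So I would instead use \(\widehat m\le M_m\) and Cauchy--Schwarz on the product \(|\delta D_\pm u|\,|D_\pm\tilde u|\), giving \(M_m\|\delta D u\|\,\|D\tilde u\|\), and similarly \(M_m\sqrt{2|\Omega|}\,|\delta\varpi_j|\,\|D\tilde u_{\cdot,j}\|\). Taking the supremum over \(\|\tilde u\|_{u,\Delta}\le1\) gives the dual bound.

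Collecting the three blocks yields \(L\) depending only on \(M_m,M_g,M_\varpi,|\Omega|\). On \(K_{\Delta,M}\) the same computation works with \(|D_\pm u|\le 2M_u/\Dx\) in place of \(M_g\), which gives a grid-dependent but finite constant.
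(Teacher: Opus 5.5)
Your proposal is correct and follows essentially the same route as the paper. The shared steps are: the Riesz isometry and block splitting; direct $L^2$ bounds on the HJB and market-clearing residuals via the $1$-Lipschitz positive part and the box estimates; and, for the transport residual, summation by parts in time (with $\delta m_{\cdot,0}=0$, $\tilde u_{\cdot,J}=0$), the adjoint identity, and the split $m\,\partial F-\widehat m\,\partial\widehat F=\delta m\,\partial F+\widehat m(\partial F-\partial\widehat F)$. The $K_{\Delta,M}$ case likewise comes from $M_g=2M_u/\Delta x$.

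Two harmless slips should be fixed. First, $\widehat m\le M_m$ is also used in the transport block, as you in fact do, so the market-clearing block is not the only place it is needed. Second, the first transport piece carries both the $D_+$ and $D_-$ terms, so its constant is $2(M_g+M_\varpi)$ rather than $M_g+M_\varpi$.
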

	\begin{proof}
		Write
		\[
		\omega_\Delta=(m,u,\varpi),
		\qquad
		\widehat\omega_\Delta
		=
		(\widehat m,\widehat u,\widehat\varpi),
		\]
		and set
		\[
		\delta m=m-\widehat m,\qquad
		\delta u=u-\widehat u,\qquad
		\delta\varpi=\varpi-\widehat\varpi.
		\]
		For all nodes, write
		\[
		g_{i,j}=G_{i,j}(u,\varpi),
		\qquad
		\widehat g_{i,j}
		=
		G_{i,j}(\widehat u,\widehat\varpi),
		\]
		with \(\partial_p\widehat F_{i,j}\), \(\partial_q\widehat F_{i,j}\)
		evaluated at \((\widehat u,\widehat\varpi)\), and derivatives at
		\(i=0,I\) taken from the reduced Hamiltonians.
		
		Since both states lie in \(K_{\Delta,M,g}\), all available
		positive-part arguments satisfy
		\[
		|-D_+u_{i,j}-\varpi_j|,
		\quad
		|D_-u_{i,j}+\varpi_j|
		\le R,
		\qquad
		R=M_g+M_\varpi,
		\]
		and likewise for the hatted state. Consequently, for all available
		derivatives,
		\[
		|\partial_pF_{i,j}|,\ |\partial_qF_{i,j}|
		\le R,
		\qquad
		|\partial_\varpi G_{i,j}(u,\varpi)|
		\le R,
		\]
		with the same bounds at the hatted state; the last bound holds because
		\(\partial_wF\) is the difference of two numbers in \([0,R]\). 
		Because the positive part function is 1-Lipschitz, we have
		\begin{align}
			|\partial_pF_{i,j}-\partial_p\widehat F_{i,j}|
			&\le |D_+\delta u_{i,j}|+|\delta\varpi_j|,
			\notag\\
			|\partial_qF_{i,j}-\partial_q\widehat F_{i,j}|
			&\le |D_-\delta u_{i,j}|+|\delta\varpi_j|,
			\notag\\
			|g_{i,j}-\widehat g_{i,j}|
			&\le
			R\bigl(
			|D_+\delta u_{i,j}|+|D_-\delta u_{i,j}|
			+|\delta\varpi_j|
			\bigr),
			\label{eq:flux-difference-bounds}
		\end{align}
		since the derivative bounds hold along the segment joining the
		two states in the convex set \(K_{\Delta,M,g}\). At boundary nodes,
		unavailable differences are omitted.
		
	Write \(|v|_{t,\Delta}\) and \(|v|_{x,\Delta}\) for the
	temporal- and spatial-difference seminorms in
	\eqref{eq:norm-u}. The forward and backward spatial
	differences have the same weighted \(\ell^2\) norm.
	Hence, by Cauchy--Schwarz,
	\begin{equation}\label{eq:Ru-bound}
		\begin{aligned}
			\|\delta R_{\Delta,\mathrm{HJB}}\|_{(m,\Delta)'}
			&\le
			|\delta u|_{t,\Delta}
			+2R|\delta u|_{x,\Delta}
			+R\sqrt{2|\Omega|}\,
			\|\delta\varpi\|_{\varpi,\Delta}\\
			&\le
			\sqrt{1+4R^2}\,\|\delta u\|_{u,\Delta}
			+R\sqrt{2|\Omega|}\,
			\|\delta\varpi\|_{\varpi,\Delta}.
		\end{aligned}
	\end{equation}

		For the transport residual in the value dual norm, let \(\phi\) be a value direction with \(\phi_{\cdot,J}=0\) and
		\(\|\phi\|_{u,\Delta}\le1\). 
		Set \(\delta m_{\cdot,0}=0\) to encode cancellation of the
		common initial data. Discrete summation by parts, using
		\(\phi_{\cdot,J}=0\), gives
		\begin{equation}\label{eq:sbp-transport}
			\Dx\Dt\sum_{j=0}^{J-1}\sum_i
			\frac{\delta m_{i,j+1}-\delta m_{i,j}}{\Dt}\,\phi_{i,j}
			=
			-\Dx\Dt\sum_{j=1}^{J}\sum_i
			\delta m_{i,j}\,\frac{\phi_{i,j}-\phi_{i,j-1}}{\Dt},
		\end{equation}
		bounded in absolute value by \(\|\delta m\|_{m,\Delta}\)
		times the temporal-difference seminorm of \(\phi\) in
		\eqref{eq:norm-u}. For the
		adjoint transport term, the adjoint identity~\eqref{eq:adjoint-identity}
		transfers the spatial differences onto \(\phi\), giving at each time level
		\begin{align}
			&\Dx\sum_i\bigl[L_j[u,\varpi]^*(m_{\cdot,j+1})_i
			-L_j[\widehat u,\widehat\varpi]^*(\widehat m_{\cdot,j+1})_i\bigr]\phi_{i,j}
			\notag\\
			&=
			\Dx\sum_{i=0}^{I-1}
			\bigl[m_{i,j+1}\partial_pF_{i,j}
			-\widehat m_{i,j+1}\partial_p\widehat F_{i,j}\bigr]
			(-D_+\phi_{i,j})
			+
			\Dx\sum_{i=1}^{I}
			\bigl[m_{i,j+1}\partial_qF_{i,j}
			-\widehat m_{i,j+1}\partial_q\widehat F_{i,j}\bigr]
			D_-\phi_{i,j}.
			\label{eq:adj-diff}
		\end{align}
		Using
		\[
		m\partial_pF-\widehat m\,\partial_p\widehat F
		=
		\delta m\,\partial_pF
		+
		\widehat m(\partial_pF-\partial_p\widehat F),
		\]
		and the analogous identity for the \(q\)-term,
		the coefficient bounds, \eqref{eq:flux-difference-bounds}, and
		\eqref{eq:box-estimates} 
		give, after summing in time and applying Cauchy--Schwarz,
		the following upper bound for the absolute value of the
		transport pairing:
		\[
		\begin{aligned}
			&\|\delta m\|_{m,\Delta}
			\bigl(|\phi|_{t,\Delta}+2R|\phi|_{x,\Delta}\bigr)\\
			&\quad+
			\left(
			2M_m|\delta u|_{x,\Delta}
			+2M_m\sqrt{2|\Omega|}\,
			\|\delta\varpi\|_{\varpi,\Delta}
			\right)|\phi|_{x,\Delta}.
		\end{aligned}
		\]
		
		Since
		\(
		|\phi|_{t,\Delta}+2R|\phi|_{x,\Delta}
		\le\sqrt{1+4R^2}\,\|\phi\|_{u,\Delta},
		\)
		taking the supremum over \(\|\phi\|_{u,\Delta}\le1\) gives
		\begin{equation}\label{eq:Rm-bound}
			\|\delta R_{\Delta,\mathrm{tr}}\|_{(u,\Delta)'}
			\le
			\sqrt{1+4R^2}\,\|\delta m\|_{m,\Delta}
			+2M_m\|\delta u\|_{u,\Delta}
			+2M_m\sqrt{2|\Omega|}\,\|\delta\varpi\|_{\varpi,\Delta}.
		\end{equation}

		For the market-clearing residual in the price norm, \eqref{eq:disc-Rw} gives, for \(j=0,\dots,J-1\),
		\[
		(\delta R_{\Delta,\mathrm{mc}})_j
		=
		\Dx\sum_{i=0}^{I}
		\Bigl[
		\delta m_{i,j+1}\,
		\partial_\varpi G_{i,j}(u,\varpi)
		+
		\widehat m_{i,j+1}
		\bigl(
		\partial_\varpi G_{i,j}(u,\varpi)
		-
		\partial_\varpi G_{i,j}(\widehat u,\widehat\varpi)
		\bigr)
		\Bigr].
		\]
		Moreover,
		\[
		\left|
		\partial_\varpi G_{i,j}(u,\varpi)
		-
		\partial_\varpi G_{i,j}(\widehat u,\widehat\varpi)
		\right|
		\le
		|D_+\delta u_{i,j}|+|D_-\delta u_{i,j}|
		+2|\delta\varpi_j|,
		\]
		with unavailable boundary differences omitted.
		
		Using this estimate, the coefficient bounds, and
		\eqref{eq:box-estimates}, then summing in time, gives
		\begin{equation}\label{eq:Rw-bound}
			\|\delta R_{\Delta,\mathrm{mc}}\|_{\varpi,\Delta}
			\le
			R\sqrt{2|\Omega|}\,\|\delta m\|_{m,\Delta}
			+2M_m\sqrt{2|\Omega|}\,\|\delta u\|_{u,\Delta}
			+4M_m|\Omega|\,\|\delta\varpi\|_{\varpi,\Delta}.
		\end{equation}
		
		Combining \eqref{eq:Ru-bound}, \eqref{eq:Rm-bound}, and
		\eqref{eq:Rw-bound} in \eqref{eq:dual-norm-split}, and bounding each
		squared row by the sum of the squares of its coefficients times
		\(\|\omega_\Delta-\widehat\omega_\Delta\|_{X_\Delta}^2\), gives
		\[
		\|A_\Delta(\omega_\Delta)-A_\Delta(\widehat\omega_\Delta)\|_{X_\Delta'}
		\le
		L\,\|\omega_\Delta-\widehat\omega_\Delta\|_{X_\Delta},
		\]
		with
		\begin{equation}\label{eq:L-explicit}
		L^2
		=
		2(1+4R^2)+4R^2|\Omega|+4M_m^2+16M_m^2|\Omega|+16M_m^2|\Omega|^2,
		\qquad
		R=M_g+M_\varpi,
		\end{equation}
		independent of the mesh. The Riesz isometry then gives
		\eqref{eq:lip}.
		
		On \(K_{\Delta,M}\), every slope satisfies
		\(|D_\pm u_{i,j}|\le 2M_u/\Dx\), so
		\(K_{\Delta,M}=K_{\Delta,M,g}\) with \(M_g=2M_u/\Dx\), and the
		estimate above gives a Lipschitz constant on each fixed grid. 
	\end{proof}
	
	\section{Projected Extragradient Algorithm and Convergence}
	\label{sec:eg}
	
	We introduce the exact metric-projection extragradient
	method and prove its convergence for the discrete variational
	inequality. We then state the conditions under which the step size can
	be chosen uniformly under grid refinement.

	\subsection{Projected extragradient iteration}
	\label{sec:eg-general}
	
	Set
	\[
	F_\Delta:=\mathcal R_\Delta A_\Delta.
	\]
	By Proposition~\ref{prop:disc-mono} and the Riesz lift identity,
	\(F_\Delta\) is monotone. It is Lipschitz on \(K_{\Delta,M}\) on each
	fixed grid, while Proposition~\ref{prop:lip} gives a mesh-independent
	bound on \(K_{\Delta,M,g}\). The discrete variational inequality is to find
	\(\omega_\Delta^*\in K_{\Delta,M}\) such that
	\begin{equation}\label{eq:VI}
		(F_\Delta(\omega_\Delta^{*}),\,
		\tilde\omega_\Delta-\omega_\Delta^{*})_{X_\Delta} \ge 0
		\qquad
		\forall\,\tilde\omega_\Delta\in K_{\Delta,M}.
	\end{equation}

	The density-only argument of
	Proposition~\ref{prop:cont-cocoerc} also applies to \(A_\Delta\), so
	\(F_\Delta\) is not cocoercive on \(K_{\Delta,M}\).
	We
	therefore use Korpelevich's projected extragradient method
	\cite{korpelevich1976,facchinei2003finite}, which applies to monotone
	Lipschitz variational inequalities.
	
	For
	\(z_\Delta=(z_m,z_u,z_\varpi)\in\bar\omega_\Delta+X_\Delta\), define
	the metric projection by
	\begin{equation}\label{eq:exact-metric-projection}
		P_{K_{\Delta,M}}^{X_\Delta}(z_\Delta)
		=
		\operatorname*{argmin}_{\eta_\Delta\in K_{\Delta,M}}
		\frac12
		\|\eta_\Delta-z_\Delta\|_{X_\Delta}^2.
	\end{equation}
	Given \(\omega_\Delta^0\in K_{\Delta,M}\) and
	\(\nu>0\), define
	\begin{align}
		\hat\omega_\Delta^{k}
		&= P_{K_{\Delta,M}}^{X_\Delta}\bigl(
		\omega_\Delta^{k}-\nu\,F_\Delta(\omega_\Delta^{k})
		\bigr),
		\label{eq:eg-pred}\\[4pt]
		\omega_\Delta^{k+1}
		&= P_{K_{\Delta,M}}^{X_\Delta}\bigl(
		\omega_\Delta^{k}-\nu\,F_\Delta(\hat\omega_\Delta^{k})
		\bigr).
		\label{eq:eg-corr}
	\end{align}
	Each iteration uses two evaluations of \(F_\Delta\) and two
	projections and acts only on the free degrees of freedom, so the
	terminal value \(u_{\cdot,J}=\bar u\) remains fixed.
	The initial density enters as fixed data \(m_{\cdot,0}:=\bar m\)
	in the transport residual; only density levels \(j=1,\dots,J\)
	are updated.
	
	For the inner product~\eqref{eq:disc-ip}, the projection decomposes
	as follows:
	\[
	P_{K_{\Delta,M}}^{X_\Delta}(z_\Delta)
	=
	\bigl(
	\operatorname{clip}_{[0,M_m]}(z_m),\,
	\Pi_u(z_u),\,
	\operatorname{clip}_{[-M_\varpi,M_\varpi]}(z_\varpi)
	\bigr),
	\]
	where clipping is componentwise, $\operatorname{clip}_{[a,b]}(r)
	=
	\min\{\max\{r,a\},b\}$, and
	\begin{equation}\label{eq:value-metric-projection}
		\Pi_u(z_u)
		=
		\operatorname*{argmin}_{\substack{
				v_{\cdot,J}=\bar u\\
				|v_{i,j}|\le M_u,\;
				0\le i\le I,\;0\le j\le J-1}}
		\frac12\|v-z_u\|_{u,\Delta}^2 .
	\end{equation}
	Thus \(\Pi_u(z_u)\) is the unique minimizer of a strictly convex
	bound-constrained quadratic problem with quadratic form induced
	by \(\Ltt{T}+\Lxx\).
	Algorithm~1 summarizes the exact method.
	For the mesh-independent result, the projection is instead onto
	\(K_{\Delta,M,g}\); equivalently, the constraints
	\[
	|D_+v_{i,j}|\le M_g,
	\qquad i=0,\dots,I-1,\quad j=0,\dots,J-1
	\]
	are added to \eqref{eq:value-metric-projection}.

	\medskip
	\noindent\rule{\textwidth}{0.4pt}
	\noindent\textbf{Algorithm 1.}
	Exact metric-projection extragradient method.
	
	\noindent\rule{\textwidth}{0.4pt}
	\begin{enumerate}[leftmargin=2em]
		\item Choose
		\(\omega_\Delta^0\in K_{\Delta,M}\),
		\(\nu>0\), \(\mathrm{tol}>0\), and \(k_{\max}\).
		
		\item For \(k=0,\dots,k_{\max}-1\):
		\begin{enumerate}[label=\textup{(\alph*)},leftmargin=2em]
			\item Evaluate \(F_\Delta(\omega_\Delta^k)\) using
			\eqref{eq:disc-Ru}, \eqref{eq:disc-Rm},
			\eqref{eq:disc-Rw}, and \eqref{eq:RhAh-def}.
			\item Compute the predictor by \eqref{eq:eg-pred}.
			\item Evaluate \(F_\Delta(\hat\omega_\Delta^k)\).
			\item Compute the corrector by \eqref{eq:eg-corr}.
			\item Stop if
			\(\|\omega_\Delta^{k+1}-\omega_\Delta^k\|_{X_\Delta}
			<\mathrm{tol}\).
		\end{enumerate}
		
		\item Return the latest iterate.
	\end{enumerate}
	\noindent\rule{\textwidth}{0.4pt}
	\medskip
	
	\begin{remark}[Natural VI residual and stopping criterion]
		For \(\nu>0\) and \(\omega_\Delta\in K_{\Delta,M}\), define
		\[
		r_\nu(\omega_\Delta)
		:=
		\nu^{-1}
		\left\|\omega_\Delta-
		P_{K_{\Delta,M}}^{X_\Delta}
		\bigl(\omega_\Delta-\nu F_\Delta(\omega_\Delta)\bigr)
		\right\|_{X_\Delta}.
		\]
		By the characterization of the metric projection, this residual
		vanishes if and only if \(\omega_\Delta\) solves the associated VI.
		The predictor therefore provides
		\(r_\nu(\omega_\Delta^k)
		=\nu^{-1}\|\omega_\Delta^k-\hat\omega_\Delta^k\|_{X_\Delta}\).
		
		Let \(L_\Delta>0\) be a Lipschitz constant of \(F_\Delta\) on
		\(K_{\Delta,M}\), and suppose \(0<\nu L_\Delta<1\).
		Both projection steps in
		\eqref{eq:eg-pred}--\eqref{eq:eg-corr} have the same base point
		\(\omega_\Delta^k\). Hence nonexpansiveness of the metric
		projection and Lipschitz continuity of \(F_\Delta\) give
		\[
		\begin{aligned}
			\|\omega_\Delta^{k+1}-\hat\omega_\Delta^k\|_{X_\Delta}
			&\le
			\nu\|F_\Delta(\hat\omega_\Delta^k)
			-F_\Delta(\omega_\Delta^k)\|_{X_\Delta}\\
			&\le
			\nu L_\Delta
			\|\omega_\Delta^k-\hat\omega_\Delta^k\|_{X_\Delta}.
		\end{aligned}
		\]
		The reverse triangle inequality therefore yields
		\[
		\begin{aligned}
			\|\omega_\Delta^{k+1}-\omega_\Delta^k\|_{X_\Delta}
			&\ge
			\|\omega_\Delta^k-\hat\omega_\Delta^k\|_{X_\Delta}
			-\|\omega_\Delta^{k+1}-\hat\omega_\Delta^k\|_{X_\Delta}\\
			&\ge
			(1-\nu L_\Delta)
			\|\omega_\Delta^k-\hat\omega_\Delta^k\|_{X_\Delta}.
		\end{aligned}
		\]
		Dividing by \(\nu(1-\nu L_\Delta)>0\) gives
		\[
		r_\nu(\omega_\Delta^k)
		\le
		\frac{\|\omega_\Delta^{k+1}-\omega_\Delta^k\|_{X_\Delta}}
		{\nu(1-\nu L_\Delta)}.
		\]
		Accordingly, the stopping rule above guarantees
		\(r_\nu(\omega_\Delta^k)
		<\mathrm{tol}/[\nu(1-\nu L_\Delta)]\).
		Replacing \(K_{\Delta,M}\) by \(K_{\Delta,M,g}\) in the
		iteration and in the definition of \(r_\nu\), the same estimate
		holds with \(L_\Delta\) replaced by the mesh-independent bound \(L\)
		from Proposition~\ref{prop:lip}, provided \(0<\nu L<1\).
	\end{remark}

	\begin{remark}[Exact and clipped projections]
		\label{rem:projection}
		Theorem~\ref{thm:convergence} concerns the exact projection
		\eqref{eq:exact-metric-projection}. In the experiments,
		\(\Pi_u\) in \eqref{eq:value-metric-projection} is replaced by
		componentwise clipping. 
		The clipped and exact \(K_{\Delta,M}\)-projected updates coincide
		when the value-function clipping is inactive. If the same
		value-function candidate also satisfies the slope bounds, the update
		equals the exact \(K_{\Delta,M,g}\)-projection. These conditions
		are checked in Section~\ref{sec:num-bounds}.
	\end{remark}
	
	\subsection{Convergence of the projected extragradient iteration}
	\label{sec:convergence}

	If \(\omega_\Delta^*\) lies in the relative interior of
	\(K_{\Delta,M}\), then \eqref{eq:VI} is equivalent to
	\(F_\Delta(\omega_\Delta^*)=0\), and hence to
	\(A_\Delta(\omega_\Delta^*)=0\). On the boundary, write
	\(\omega_\Delta^*=(m,u,\varpi)\). Density variations give
	\begin{equation}\label{eq:complementarity}
		R_{\Delta,\mathrm{HJB}}(\omega_\Delta^*)_{i,j}
		\begin{cases}
			\ge0, & m_{i,j+1}=0,\\
			=0, & 0<m_{i,j+1}<M_m,\\
			\le0, & m_{i,j+1}=M_m,
		\end{cases}
	\end{equation}
	so \(u\) satisfies the discrete HJB equation where
	\(0<m_{i,j+1}<M_m\) and the subsolution inequality where
	\(m_{i,j+1}=0\). If the price bound is
	inactive, price variations give \(R_{\Delta,\mathrm{mc}}(\omega_\Delta^*)=0\).
	
	If the value-function box is inactive at a solution of
	\eqref{eq:VI}, both signs of sufficiently small value-function
	variations are feasible, so \(R_{\Delta,\mathrm{tr}}=0\). Combining
	\eqref{eq:disc-Rm} with the identity~\eqref{eq:Lstar-sum-zero} gives
	\[
	\Dx\sum_{i=0}^{I}m_{i,j}=1,
	\qquad j=1,\dots,J.
	\]
	Thus mass is conserved at such a solution, although it is not imposed
	during the iteration. For the VI on \(K_{\Delta,M,g}\), the same
	argument gives \(R_{\Delta,\mathrm{tr}}=0\) if the slope constraints
	are also inactive. Mass conservation still follows from inactivity
	of the value-function box alone, by taking spatially constant
	variations at each free time level.
	
	\begin{theorem}[Convergence]
		\label{thm:convergence}
		Let \((\cdot,\cdot)_{X_\Delta}\) be an inner product on
		\(X_\Delta\), with associated Riesz map \(\mathcal R_\Delta\), and set
		\(F_\Delta=\mathcal R_\Delta A_\Delta\). 
		Assume that
		\(K_{\Delta,M}\ne\varnothing\) and that \(F_\Delta\) is monotone and
		\(L_\Delta\)-Lipschitz on \(K_{\Delta,M}\), for some
		\(L_\Delta>0\), with respect to \(\|\cdot\|_{X_\Delta}\).
		 Let the step size satisfy
		\begin{equation}\label{eq:step-size}
			0<\nu<\frac{1}{L_\Delta}.
		\end{equation}
		Then \eqref{eq:VI} has a solution and, for every
		\(\omega_\Delta^0\in K_{\Delta,M}\), the iteration
		\eqref{eq:eg-pred}--\eqref{eq:eg-corr} is well defined, remains in
		\(K_{\Delta,M}\), and converges in
		\(\|\cdot\|_{X_\Delta}\) to a solution of \eqref{eq:VI}.
	\end{theorem}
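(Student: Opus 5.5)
The argument is the standard Korpelevich analysis in the finite-dimensional Hilbert space \((X_\Delta,(\cdot,\cdot)_{X_\Delta})\). The nontrivial extra step is existence of a VI solution, which here follows from compactness rather than being assumed.

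\emph{Existence.} \(K_{\Delta,M}\) is a nonempty subset of the finite-dimensional affine space \(\bar\omega_\Delta+X_\Delta\). It is closed and convex, and it is bounded because of the box constraints on \(m\), \(u\), \(\varpi\); hence it is compact. \(F_\Delta\) is continuous, being Lipschitz. The map \(\omega\mapsto P^{X_\Delta}_{K_{\Delta,M}}(\omega-\nu F_\Delta(\omega))\) is therefore a continuous self-map of a compact convex set, and Brouwer's theorem gives a fixed point. By the variational characterization of the metric projection,
\[
(z-P(z),\eta-P(z))_{X_\Delta}\le0\qquad\text{for all }\eta\in K_{\Delta,M},
\]
and a fixed point of this map is exactly a solution of \eqref{eq:VI}; this is the Hartman--Stampacchia argument. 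Well-definedness of the iteration and the property of remaining in \(K_{\Delta,M}\) are immediate, since the projection onto a nonempty closed convex set exists, is unique, and lands in \(K_{\Delta,M}\).

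\emph{Fejér estimate.} Fix a solution \(\omega^*\). Write \(\omega^k,\hat\omega^k\) for the iterates and drop the subscript \(\Delta\). The key inequality to establish is
\[
\|\omega^{k+1}-\omega^*\|^2\le\|\omega^k-\omega^*\|^2-(1-\nu^2L_\Delta^2)\|\omega^k-\hat\omega^k\|^2 .
\]
To derive it, apply the projection inequality for \(\omega^{k+1}\) with test point \(\omega^*\). Expand \(\|\omega^{k+1}-\omega^*\|^2\) around \(\omega^k\), which produces the term \(-2\nu(F(\hat\omega^k),\omega^{k+1}-\omega^*)\). Split this term at \(\hat\omega^k\):
\begin{itemize}
\item monotonicity together with the VI at \(\omega^*\) gives \((F(\hat\omega^k),\hat\omega^k-\omega^*)\ge(F(\omega^*),\hat\omega^k-\omega^*)\ge0\);
\item the remaining piece \((F(\hat\omega^k),\omega^{k+1}-\hat\omega^k)\) is handled with the projection inequality for \(\hat\omega^k\), tested at \(\omega^{k+1}\).
\end{itemize}
Combining these leaves the cross term \(2\nu(F(\omega^k)-F(\hat\omega^k),\omega^{k+1}-\hat\omega^k)\). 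Bound it by Cauchy--Schwarz, the Lipschitz property, and Young's inequality:
\[
2\nu(F(\omega^k)-F(\hat\omega^k),\omega^{k+1}-\hat\omega^k)\le\nu^2L_\Delta^2\|\omega^k-\hat\omega^k\|^2+\|\omega^{k+1}-\hat\omega^k\|^2 .
\]
This absorbs the \(-\|\omega^{k+1}-\hat\omega^k\|^2\) term produced by the expansion. This bookkeeping is the only delicate step, and it is where \eqref{eq:step-size} enters.

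\emph{Convergence.} Because \(1-\nu^2L_\Delta^2>0\), the sequence \(\|\omega^k-\omega^*\|\) is nonincreasing, so \((\omega^k)\) is bounded. Summing the Fejér estimate gives \(\sum_k\|\omega^k-\hat\omega^k\|^2<\infty\), so \(\omega^k-\hat\omega^k\to0\). By compactness, some subsequence \(\omega^{k_n}\) converges to a limit \(\bar\omega\in K_{\Delta,M}\). Then \(\hat\omega^{k_n}\to\bar\omega\) as well. Passing to the limit in the predictor step by continuity of \(F_\Delta\) and of the projection gives
\[
\bar\omega=P^{X_\Delta}_{K_{\Delta,M}}\bigl(\bar\omega-\nu F_\Delta(\bar\omega)\bigr),
\]
so \(\bar\omega\) solves \eqref{eq:VI}. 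Finally, apply the Fejér estimate with \(\omega^*=\bar\omega\). The sequence \(\|\omega^k-\bar\omega\|\) is monotone and has a subsequence tending to \(0\), so the whole sequence converges to \(\bar\omega\) in \(\|\cdot\|_{X_\Delta}\).
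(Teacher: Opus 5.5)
Your proposal is correct and follows the paper's proof essentially step by step. The shared steps are: existence via the finite-dimensional Hartman--Stampacchia theorem, which you unpack as a Brouwer fixed-point argument for \(\omega\mapsto P^{X_\Delta}_{K_{\Delta,M}}(\omega-\nu F_\Delta(\omega))\); the Fej\'er estimate, obtained from the corrector inequality tested at \(\omega^*\), monotonicity plus the VI at \(\omega^*\), the predictor inequality tested at \(\omega^{k+1}\), and Young's inequality on the Lipschitz cross term; and the same compactness-and-Fej\'er argument for convergence of the full sequence. The only cosmetic difference is that you pass to the limit in the fixed-point form of the predictor step, whereas the paper passes to the limit in its variational inequality \eqref{eq:proj-predict}; the two are equivalent.
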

	\begin{proof}
		Since \(K_{\Delta,M}\) is nonempty, compact, and convex and
		\(F_\Delta\) is continuous, \eqref{eq:VI} has a solution by the
		finite-dimensional Hartman--Stampacchia theorem; see
		\cite[Vol.~I, Section~2.2]{facchinei2003finite}. The metric projection is well defined, so
		all predictor and corrector iterates remain in \(K_{\Delta,M}\).
		
		Fix a solution
		\(\omega_\Delta^{*}\in K_{\Delta,M}\) of \eqref{eq:VI}.
		By the
		characterization of the metric projection, the predictor step gives
		\begin{equation}\label{eq:proj-predict}
			\bigl(
			\hat\omega_\Delta^{k}-\omega_\Delta^{k}
			+\nu\,F_\Delta(\omega_\Delta^{k}),
			\tilde\omega_\Delta-\hat\omega_\Delta^{k}
			\bigr)_{X_\Delta}
			\ge 0
			\qquad
			\forall\,\tilde\omega_\Delta\in K_{\Delta,M},
		\end{equation}
		and the corrector step gives
		\begin{equation}\label{eq:proj-correct}
			\bigl(
			\omega_\Delta^{k+1}-\omega_\Delta^{k}
			+\nu\,F_\Delta(\hat\omega_\Delta^{k}),
			\tilde\omega_\Delta-\omega_\Delta^{k+1}
			\bigr)_{X_\Delta}
			\ge 0
			\qquad
			\forall\,\tilde\omega_\Delta\in K_{\Delta,M}.
		\end{equation}
		
		For brevity, set
		\[
		x=\omega_\Delta^k,\qquad
		y=\hat\omega_\Delta^k,\qquad
		z=\omega_\Delta^{k+1},\qquad
		s=\omega_\Delta^*,
		\]
		and use the inner product and norm of \(X_\Delta\).
		
		Combining \eqref{eq:proj-correct} for \(\tilde\omega_\Delta=s\) with the elementary identity
		\[\|z-s\|^2+\|z-x\|^2-2 (z-s, z-x)=\|x-s\|^2\] yields
		\[
		\|z-s\|^2
		\le
		\|x-s\|^2-\|z-x\|^2
		-2\nu(F_\Delta(y),z-s).
		\]
		Monotonicity and \eqref{eq:VI} give
		\[
		(F_\Delta(y),y-s)\ge(F_\Delta(s),y-s)\ge0.
		\]
		Accordingly 
		\begin{equation}
	    \label{corrineq}
		\|z-s\|^2
		\leq \|x-s\|^2-\|z-x\|^2-2\nu(F_\Delta(y),z-y).
		\end{equation}
		Using \(\tilde\omega_\Delta=z\) in 
		\eqref{eq:proj-predict}  yields
		\begin{equation}
			\label{predicineq}
		-2 \nu (F_\Delta(x),z-y)\leq 2(y-x,z-y)=-\|y-x\|^2-\|z-y\|^2+\|z-x\|^2.
		\end{equation}
        Combining \eqref{corrineq} with \eqref{predicineq} gives
		\begin{align}
			\|z-s\|^2
			&\le
			\|x-s\|^2-\|z-x\|^2
			-2\nu(F_\Delta(y),z-y)
			\notag\\
			&\le
			\|x-s\|^2-\|y-x\|^2-\|z-y\|^2
			+2\nu(F_\Delta(x)-F_\Delta(y),z-y).
			\label{eq:eg-detail}
		\end{align}		
		By Cauchy--Schwarz, \(L_\Delta\)-Lipschitz continuity on
		\(K_{\Delta,M}\), and Young's inequality,
		\[
		2\nu(F_\Delta(x)-F_\Delta(y),z-y)
		\le
		\nu^2L_\Delta^2\|y-x\|^2+\|z-y\|^2.
		\]
		Substituting into~\eqref{eq:eg-detail} and returning to the original
		notation yields the Fej\'er estimate
		\begin{equation}\label{eq:fejer}
			\|\omega_\Delta^{k+1}-\omega_\Delta^{*}\|_{X_\Delta}^2
			\le
			\|\omega_\Delta^{k}-\omega_\Delta^{*}\|_{X_\Delta}^2
			-
			\bigl(1-\nu^2L_\Delta^2\bigr)\,
			\|\hat\omega_\Delta^{k}-\omega_\Delta^{k}\|_{X_\Delta}^2.
		\end{equation}
		
		Since $\nu<1/L_\Delta$, the coefficient $1-\nu^2L_\Delta^2$ is positive.
		Therefore
		\[
		\sum_{k=0}^\infty
		\|\hat\omega_\Delta^{k}-\omega_\Delta^{k}\|_{X_\Delta}^2<\infty,
		\]
		so
		\[
		\|\hat\omega_\Delta^{k}-\omega_\Delta^{k}\|_{X_\Delta}\to 0.
		\]
		
		Compactness of \(K_{\Delta,M}\) gives a subsequence
		\(\omega_\Delta^{k_n}\to\bar\omega\). Since
		\(\|\hat\omega_\Delta^{k}-\omega_\Delta^{k}\|_{X_\Delta}\to0\), also
		\(\hat\omega_\Delta^{k_n}\to\bar\omega\). Passing to the limit in
		\eqref{eq:proj-predict} gives
		\[
		(F_\Delta(\bar\omega),\tilde\omega_\Delta-\bar\omega)_{X_\Delta}
		\ge0
		\qquad
		\forall\,\tilde\omega_\Delta\in K_{\Delta,M},
		\]
		so \(\bar\omega\) solves \eqref{eq:VI}. Applying
		\eqref{eq:fejer} with \(\bar\omega\) as reference, the sequence
		\(\|\omega_\Delta^k-\bar\omega\|_{X_\Delta}\) is nonincreasing and
		tends to zero along the subsequence. Hence the full sequence converges
		to \(\bar\omega\).
	\end{proof}
	
	The same result applies to any nonempty compact convex feasible set,
	in particular to \(K_{\Delta,M,g}\).
	
	\begin{corollary}[Mesh-independent step size]
		\label{cor:mesh-independent-step}
		Equip \(X_\Delta\) with the inner product~\eqref{eq:disc-ip}, and let
		\(L\) be the mesh-independent constant in Proposition~\ref{prop:lip}. If \(K_{\Delta,M,g}\ne\varnothing\), then
		the iteration \eqref{eq:eg-pred}--\eqref{eq:eg-corr}, with
		\(K_{\Delta,M,g}\) in place of \(K_{\Delta,M}\), converges from every
		\(\omega_\Delta^0\in K_{\Delta,M,g}\) to a solution of the corresponding
		VI, on each fixed grid and for every fixed step size
		\[
		0<\nu<\frac1L.
		\]
	\end{corollary}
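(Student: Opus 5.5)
The corollary follows by applying Theorem~\ref{thm:convergence} with \(K_{\Delta,M,g}\) in place of \(K_{\Delta,M}\). The remark after that proof observes that the argument uses only nonemptiness, compactness, and convexity of the feasible set, together with monotonicity and Lipschitz continuity of \(F_\Delta\) on it. So the plan is to verify these hypotheses for \(K_{\Delta,M,g}\) and the inner product~\eqref{eq:disc-ip}, with a Lipschitz constant \(L_\Delta\le L\) that does not depend on the mesh.

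\emph{Step 1: the feasible set.} \(K_{\Delta,M,g}\) is the intersection of \(K_{\Delta,M}\) with the sets \(\{|D_+u_{i,j}|\le M_g\}\). Each slope constraint is a pair of closed half-spaces, because \(D_+\) is linear in the free variables and the terminal level is fixed. Hence \(K_{\Delta,M,g}\) is closed and convex. It lies in the compact box \(K_{\Delta,M}\), so it is compact, and it is nonempty by hypothesis. Since \(X_\Delta\) is finite dimensional, the metric projection onto \(K_{\Delta,M,g}\) in \((\cdot,\cdot)_{X_\Delta}\) is well defined and nonexpansive.

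\emph{Step 2: properties of \(F_\Delta\).} Monotonicity on \(K_\Delta\supset K_{\Delta,M,g}\) follows from Proposition~\ref{prop:disc-mono} and the lift identity~\eqref{eq:abstract-lift-identity}. Proposition~\ref{prop:lip} gives Lipschitz continuity on \(K_{\Delta,M,g}\) with a constant \(L_\Delta\le L=L(M_m,M_g,M_\varpi,|\Omega|)\), as in~\eqref{eq:L-explicit}. Given \(0<\nu<1/L\), we therefore have \(\nu L_\Delta\le\nu L<1\), so the step-size condition~\eqref{eq:step-size} holds on every grid with the same \(\nu\). One may take \(L_\Delta:=L\) directly, since \(L\) is itself a valid Lipschitz constant.

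\emph{Step 3: conclusion.} Rerun the proof of Theorem~\ref{thm:convergence} with \(K_{\Delta,M,g}\) in place of \(K_{\Delta,M}\), following the same sequence of steps:
\begin{itemize}
\item the Hartman--Stampacchia theorem gives existence of a VI solution on the compact convex set;
\item the projection inequalities~\eqref{eq:proj-predict}--\eqref{eq:proj-correct} hold with test elements in \(K_{\Delta,M,g}\);
\item the Fej\'er estimate~\eqref{eq:fejer} follows, with coefficient \(1-\nu^2L^2>0\);
\item compactness and passage to the limit yield convergence of the full sequence.
\end{itemize}

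The main point needing care is that the Lipschitz bound is invoked only at pairs of points in \(K_{\Delta,M,g}\). The Young-inequality step uses it at \(x=\omega_\Delta^k\) and \(y=\hat\omega_\Delta^k\). Both are projections onto \(K_{\Delta,M,g}\), or the initial point, so they lie in that set. This is exactly why the projection must be onto \(K_{\Delta,M,g}\) rather than onto \(K_{\Delta,M}\): on the larger set only the mesh-dependent constant with \(M_g=2M_u/\Dx\) would be available.
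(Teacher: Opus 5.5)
Your proposal is correct and follows the paper's route: the paper justifies the corollary by noting that Theorem~\ref{thm:convergence} applies verbatim to any nonempty compact convex feasible set, in particular \(K_{\Delta,M,g}\), with monotonicity from Proposition~\ref{prop:disc-mono} and the mesh-independent bound \(L_\Delta\le L\) from Proposition~\ref{prop:lip}. You add a check the paper leaves implicit, namely that the Lipschitz estimate is only invoked at \(\omega_\Delta^k,\hat\omega_\Delta^k\in K_{\Delta,M,g}\), and this is precisely why the projection must be onto \(K_{\Delta,M,g}\).
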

	
	For the \(K_{\Delta,M}\)-projected method, the same convergence
	conclusion holds with \(0<\nu<1/L\) if all states of the infinite
	iteration, including predictors, remain in \(K_{\Delta,M,g}\).
	In the experiments, the checks of Section~\ref{sec:num-bounds},
	together with Remark~\ref{rem:projection}, identify the computed
	trajectory with a finite initial segment of the exact
	\(K_{\Delta,M,g}\)-projected method.

	\section{Quadratic Analytical Benchmark}
	\label{sec:benchmark}

	For the benchmark, we use the quadratic running cost
	\[
	\ell(t,\alpha)=\frac{\alpha^2}{2}+\alpha\varpi(t).
	\]
	Under the dynamics \(\dot X_t=\alpha_t\), the HJB equation uses
	\[
	H(x,p)=\frac{p^2}{2},
	\qquad
	p=\varpi+u_x,
	\]
	with optimal feedback \(\alpha^*=-p\). The first-order
	MFG system is
	\begin{equation}\label{eq:quad-system}
		\begin{cases}
			-u_t + \frac{(\varpi+u_x)^2}{2} = 0, \\[4pt]
			m_t - \bigl(m(\varpi+u_x)\bigr)_x = 0, \\[4pt]
			\displaystyle\int_\R m(\varpi+u_x)\,dx = -Q(t),
		\end{cases}
	\end{equation}
	with $m(x,0)=\bar m(x)$ and $u(x,T) = \bar u(x)$.
	
	To validate the numerical method, we choose a quadratic terminal cost
	for which \eqref{eq:quad-system} reduces to a closed system of ODEs and
	admits an explicit solution; see
	\cite{gomesMeanFieldGameApproach2021}.
	
	\subsection{Benchmark data}
	
	The terminal cost is
	\begin{equation}\label{eq:terminal}
		\bar u(x) = \frac{\gamma}{2}(x-\xi)^2,
	\end{equation}
	where $\gamma > 0$ and $\xi\in\R$ are fixed parameters. 
	This cost is not covered by the whole-line assumption
	\ref{H:potential}; the explicit whole-line solution is derived directly
	and then restricted to the fixed computational interval.
	The initial density is the normalized smooth bump
	\begin{equation}\label{eq:m0}
		\bar m(x) = \frac{\hat m(x)}{\int_{-1}^1\hat m(y)\,dy}, \qquad
		\hat m(x) = \begin{cases}
			\exp\bigl(-\frac{1}{1-x^2}\bigr), & |x|<1, \\
			0, & |x|\ge 1.
		\end{cases}
	\end{equation}
	
	The supply \(Q\) is defined by
	\begin{equation}\label{eq:supply}
		\dot Q(t) = \bar Q(t) - \kappa\,Q(t),
		\qquad Q(0) = s_0,
	\end{equation}
	with the supply forcing $\bar Q(t) = 5\sin(3\pi t)$, mean-reversion rate $\kappa = 4$,
	and initial value $s_0 = -0.5$. 
	
	\subsection{The exact solution}
	
	We seek a quadratic value function of the form
	\begin{equation}\label{eq:ansatz}
		u(x,t) = \frac{A(t)}{2}\,x^2 + B(t)\,x + C(t).
	\end{equation}
	Substitution into the HJB equation yields
	\begin{equation}\label{eq:ode-system}
		\begin{cases}
			\dot A(t) = A(t)^2, \\
			\dot B(t) = A(t)(\varpi(t)+B(t)), \\
			\dot C(t) = \frac{1}{2}(\varpi(t)+B(t))^2,
		\end{cases}
	\end{equation}
	with terminal conditions $A(T)=\gamma$, $B(T)=-\gamma\xi$,
	$C(T)=\frac{\gamma}{2}\xi^2$.
	
	The Riccati equation has the solution
	\begin{equation}\label{eq:Riccati}
		A(t) = \frac{\gamma}{1+\gamma(T-t)}.
	\end{equation}
	
	Let
	\[
	\bar x(t)=\int_\R x\,m(x,t)\,dx
	\]
	denote the mean of the density. Since the transport equation preserves
	the unit mass of \(m\), the market-clearing condition gives
	\begin{equation}\label{eq:price-formula}
		\varpi(t) = -B(t) - A(t)\bar x(t) - Q(t).
	\end{equation}
	Therefore, the $B$-equation becomes
	\[
	\dot B(t)=A(t) (-A(t)\bar x(t) -Q(t)).
	\]
	For the compactly supported density constructed below, integration
	by parts in the transport equation gives
	\[
	\dot{\bar x}(t)
	=
	-\int_\R m(x,t)\bigl(\varpi(t)+u_x(x,t)\bigr)\,dx
	=
	Q(t).
	\]
	Since
	\(\bar m\) is symmetric, \(\bar x(0)=0\), and hence
	\begin{equation}\label{eq:xbar}
		\bar x(t) = \int_0^t Q(s)\,ds.
	\end{equation}	
	Since \(\dot A=A^2\) and \(\dot{\bar x}=Q\), substituting
	\eqref{eq:price-formula} into the \(B\)-equation gives
\[
\frac{d}{dt}\bigl(B+A\bar x\bigr)
=
A(-A\bar x-Q)+A^2\bar x+AQ
=
0.
\]
	Therefore, taking into account the terminal conditions, 
	\begin{equation}\label{eq:B-formula}
		B(t)
		=
		-\gamma\xi+\gamma\bar x(T)-A(t)\bar x(t),
	\end{equation}
	and \eqref{eq:price-formula} becomes
	\begin{equation}\label{eq:exact-price}
		\varpi(t)
		=
		\gamma\bigl(\xi-\bar x(T)\bigr)-Q(t).
	\end{equation}
	
	Integrating the \(C\)-equation backward from \(T\) gives
	\begin{equation}\label{eq:C-formula}
		C(t) = \frac{\gamma}{2}\xi^2
		- \frac{1}{2}\int_t^T
		\bigl(A(s)\bar x(s) + Q(s)\bigr)^2\,ds.
	\end{equation}
	
	The transport equation can be written as
	\begin{equation}
		m_t + \partial_x\bigl(v(x,t)\,m\bigr) = 0,
		\qquad v(x,t) = Q(t) - A(t)(x-\bar x(t)).
	\end{equation}
	For a characteristic with \(X(0)=x_0\),
	\[
	\frac{d}{dt}\bigl(X(t)-\bar x(t)\bigr)
	=
	-A(t)\bigl(X(t)-\bar x(t)\bigr).
	\]
	Define
	\begin{equation}\label{eq:Lambda}
		\Lambda(t)
		=
		\exp\!\left(-\int_0^t A(s)\,ds\right)
		=
		\frac{1+\gamma(T-t)}{1+\gamma T}.
	\end{equation}
	The characteristic equation gives
	\begin{equation}\label{eq:char}
		X(t;x_0) = \bar x(t) + \Lambda(t)\,x_0.
	\end{equation}
	Since \(\Lambda(t)>0\), the pushforward of \(\bar m\) under this affine
	map is
	\begin{equation}\label{eq:exact-density}
		m(x,t) = \frac{1}{\Lambda(t)}\,
		\bar m\!\Bigl(\frac{x-\bar x(t)}{\Lambda(t)}\Bigr).
	\end{equation}
	In particular,
	\[
	\operatorname{supp}m(\cdot,t)
	=
	\bar x(t)+\Lambda(t)[-1,1],
	\]
	so the density remains compactly supported.
	These formulas solve \eqref{eq:quad-system} and provide the analytical
	benchmark for \(u\), \(m\), and \(\varpi\).
	

	\section{Numerical Experiments}
	\label{sec:numerics}
	
	We report experiments for the discrete method of
	Sections~\ref{sec:discrete}--\ref{sec:eg}. 
	
	\subsection{Setup}
	\label{sec:num-setup}

	The benchmark is that of Section~\ref{sec:benchmark}: \(T=1\),
	\(\gamma=1\), \(\xi=0.25\), \([a,b]=[-1.5,1.5]\), initial
	density~\eqref{eq:m0}, terminal cost
	\(\bar u(x)=\tfrac12(x-\tfrac14)^2\), and supply
	equation~\eqref{eq:supply}.
	The resulting supply satisfies
	\[
	-\frac12\le Q(t)<\frac12,
	\qquad 0\le t\le1.
	\]
	Since \(|\dot{\bar x}|=|Q|\le\tfrac12\) and \(\Lambda(t)=1-t/2\), we have \(|\bar x(t)|\le t/2\) and
	\[
	\operatorname{supp}m(\cdot,t)
	=
	[\bar x(t)-\Lambda(t),\,\bar x(t)+\Lambda(t)]
	\subset[-1,1],
	\qquad 0\le t\le1,
	\]
	well inside \([a,b]=[-1.5,1.5]\).
	
	\emph{Discretization.} We use \(J=I/2\), so
	\(\Dt=\tfrac23\Dx\), with the degrees of freedom and residuals defined
	in Sections~\ref{sec:grid}--\ref{sec:disc-residuals}. The initial
	density is sampled and renormalized:
	\[
	\bar m_i
	=
	\frac{\bar m(x_i)}
	{\Dx\sum_k\bar m(x_k)},
	\qquad
	\Dx\sum_i\bar m_i=1.
	\]
	The scheme uses the right-endpoint density \(m_{\cdot,j+1}\) and the
	reduced boundary Hamiltonians. 
	At the sampled exact solution, the interior HJB residual decreases at
	first order; for example,
	\[
	\max_{\substack{1\le i\le I-1\\0\le j\le J-1}}
	|R_{\Delta,\mathrm{HJB},i,j}|
	\approx
	0.158,\ 0.082,\ 0.041
	\]
	for \(I=60,120,240\).
	
	\emph{Solver.} We use Algorithm~1 and the Riesz representative
	\eqref{eq:RhAh-def}. The value-function Riesz solve exploits the
	Kronecker structure of
	\(\Ltt{T}+\Lxx\). The spatial eigendecomposition and temporal
	factorizations are computed once per grid. Each application uses
	two dense spatial transforms and prefactorized tridiagonal solves
	in time, at cost \(O(I^2J)\).
	At each predictor and corrector step,
	the implementation applies
	\[
	\Pi_\Delta^{\mathrm{clip}}(z_\Delta)
	=
	\bigl(
	\operatorname{clip}_{[0,M_m]}(z_m),\,
	\operatorname{clip}_{[-M_u,M_u]}(z_u),\,
	\operatorname{clip}_{[-M_\varpi,M_\varpi]}(z_\varpi)
	\bigr),
	\]
	with the grid-independent bounds
	\[
	M_m=M_u=M_\varpi=5.
	\]
	Density and price clipping are exact metric projections, whereas the
	value-function metric projection
	\eqref{eq:value-metric-projection} is replaced by Euclidean clipping.
	
	\emph{Iteration parameters.} We use \(\nu=0.01\),
	\(k_{\max}=2\times10^6\), and the stopping rule
	\[
	\|\omega_\Delta^{k+1}-\omega_\Delta^k\|_{X_\Delta}
	<
	\mathrm{tol}_X
	=
	2\times10^{-6}.
	\]
	This tolerance controls the iterate increment, not the equation
	residuals. No run reached the iteration cap.
	The effect of tightening the
	tolerance is reported in Section~\ref{sec:num-mass}.

	\emph{Initialization.} On the \(60\times30\) grid, we use
	\[
	u_{i,j}=\bar u(x_i),
	\qquad
	m_{i,j}=\bar m_i,
	\qquad
	\varpi_j=0.
	\]
	Finer grids are warm-started by bilinear interpolation of \(u\) and
	\(m\) and linear interpolation of \(\varpi\), 
	followed by resetting the fixed rows
	\(u_{\cdot,J}=\bar u\) and \(m_{\cdot,0}=\bar m\), clipping the
	free degrees of freedom, renormalizing each free density time slice,
	and clipping the density once more.
	Price values beyond the
	last coarse time level are obtained by extrapolating the final coarse
	segment.

	\emph{Environment.} Computations used Wolfram Language~14.3 with a
	C-compiled residual kernel on an Apple M3~Max. The supply \(Q\) was evaluated in closed form, while
	\(\bar x\), \(B\), and \(C\) were computed by ODE integration with
	ten-digit precision and accuracy goals. Reported
	runtimes include only the solver loop and exclude eigendecomposition,
	compilation, and warm-start interpolation. 
	
	\subsection{Refinement sweep}
	\label{sec:num-iterations}
	
	Table~\ref{tab:convergence} reports the refinement sweep. For
	\(\phi\in\{m,\varpi\}\), the relative error is
	\[
	E_\phi
	=
	\frac{\|\phi_\Delta-\phi_{\mathrm{ex}}\|_{\ell^\infty}}
	{\|\phi_{\mathrm{ex}}\|_{\ell^\infty}},
	\]
	evaluated over the corresponding solved degrees of freedom.
	
	For the value function, by~\eqref{eq:complementarity} the variational inequality enforces the
	discrete HJB equation where \(0<m<M_m\), but only the subsolution
	inequality where \(m=0\). Thus \(u_\Delta\) need not agree with
	\(u_{\mathrm{ex}}\) in vacuum regions; the full-domain relative error
	remains approximately \(0.2\), attained at the spatial boundary. We
	therefore report
	\[
	E_u^{(\tau)}
	=
	\frac{\max_{m_{\mathrm{ex}}\ge\tau}|u_\Delta-u_{\mathrm{ex}}|}
	{\max_{m_{\mathrm{ex}}\ge\tau}|u_{\mathrm{ex}}|},
	\qquad
	\tau=10^{-2},
	\]
	where both \(m_{\mathrm{ex}}\) and \(u_{\mathrm{ex}}\) are evaluated at
	the value-function nodes \((x_i,t_j)\), \(0\le j\le J-1\), and the
	maxima run over the nodes with \(m_{\mathrm{ex}}\ge\tau\). With the
	threshold \(10^{-4}\), the error plateaus near
	\(4\times10^{-2}\) in a thin layer around the numerically smeared
	support front; away from this layer it decreases at first order.	These experiments do not establish uniform value-function
	convergence throughout the positive-density region.
	
	\begin{table}[htbp]
		\centering
		\caption{Refinement sweep on \([-1.5,1.5]\) with
			\(\nu=0.01\) and
			\(\mathrm{tol}_X=2\times10^{-6}\).
			Errors are relative \(\ell^\infty\) errors; the value-function error
			is restricted to \(m_{\mathrm{ex}}\ge10^{-2}\).
			The mass error is
			\(\max_{1\le j\le J}|\Dx\sum_i m_{i,j}-1|\).
			All grids after the first are warm-started, and runtimes exclude setup (Section~\ref{sec:num-setup}).}
		\label{tab:convergence}
		\renewcommand{\arraystretch}{1.2}
		\setlength{\tabcolsep}{4.5pt}
		\begin{tabular}{lccccccc}
			\toprule
			Grid & \multirow{2}{*}{$\Dx$} & \multirow{2}{*}{Iterations}
			& Runtime
			& \multicolumn{3}{c}{Relative error}
			& \multicolumn{1}{c}{Mass} \\
			\cmidrule(lr){5-7} \cmidrule(lr){8-8}
			$(I\times J)$ & & & (s) & $\varpi$ & $m$ & \(u|_{m_{\mathrm{ex}}\ge10^{-2}}\)& error \\
			\midrule
			$60\times 30$
			& 0.0500   & 61\,765  & 25
			& $6.48\!\times\!10^{-2}$ & $1.79\!\times\!10^{-1}$
			& $4.77\!\times\!10^{-2}$ & $4.1\!\times\!10^{-4}$ \\
			$120\times 60$
			& 0.0250   & 39\,995  & 44
			& $3.22\!\times\!10^{-2}$ & $1.25\!\times\!10^{-1}$
			& $2.42\!\times\!10^{-2}$ & $1.9\!\times\!10^{-4}$ \\
			$240\times 120$
			& 0.0125   & 29\,601  & 89
			& $1.60\!\times\!10^{-2}$ & $8.48\!\times\!10^{-2}$
			& $1.21\!\times\!10^{-2}$ & $1.6\!\times\!10^{-4}$ \\
			$480\times 240$
			& 0.00625  & 20\,846  & 206
			& $8.01\!\times\!10^{-3}$ & $5.41\!\times\!10^{-2}$
			& $6.07\!\times\!10^{-3}$ & $1.8\!\times\!10^{-4}$ \\
			$960\times 480$
			& 0.003125 & 12\,928  & 471
			& $4.00\!\times\!10^{-3}$ & $3.30\!\times\!10^{-2}$
			& $3.02\!\times\!10^{-3}$ & $1.7\!\times\!10^{-4}$ \\
			\bottomrule
		\end{tabular}
	\end{table}

	\emph{Convergence rates.}
	The price and bulk value-function errors exhibit approximately
	first-order decay under refinement.
	The density
	error has empirical \(\ell^\infty\) order increasing from about \(0.5\) to \(0.7\),
	with its maximum concentrated near the smeared support front. 
	
	\emph{Step size and iteration counts.}
	The same step size \(\nu=0.01\) is used on every grid.  For
	\(M_m=M_g=M_\varpi=5\) and \(|\Omega|=3\), the explicit
	constant~\eqref{eq:L-explicit} gives \(L\le\sqrt{6902}<100\), so
	\(\nu=0.01\) satisfies the mesh-independent step-size condition on
	\(K_{\Delta,M,g}\). Together with the projection verification in
	Section~\ref{sec:num-bounds}, this identifies each computed trajectory
	with an initial segment of the exact iteration of
	Corollary~\ref{cor:mesh-independent-step}.
	The iteration count decreases from \(61{,}765\) on the cold-started
	coarsest grid to \(12{,}928\) on the warm-started finest grid.
	Since the initialization changes with refinement, these counts
	do not isolate the effect of the mesh size.
	
	\subsection{Mass conservation}
	\label{sec:num-mass}
	
	Mass is not enforced during the iteration. Let
	\[
	M_j=\Dx\sum_i m_{i,j},
	\qquad
	M_0=1.
	\]
	Since \(\Dx\sum_i\bar m_i=1\), summing \eqref{eq:disc-Rm} over space
	and using the identity~\eqref{eq:Lstar-sum-zero} gives
	\[
	M_{j+1}-M_j
	=
	\Dt\,\Dx\sum_i
	R_{\Delta,\mathrm{tr}}(\omega_\Delta)_{i,j},
	\qquad
	j=0,\dots,J-1.
	\]
	Consequently,
	\[
	\max_{1\le j\le J}|M_j-1|
	\le
	\Dt\sum_{n=0}^{J-1}
	\left|
	\Dx\sum_i
	R_{\Delta,\mathrm{tr}}(\omega_\Delta)_{i,n}
	\right|.
	\]

	At \(\mathrm{tol}_X=2\times10^{-6}\), the mass errors range from
	\(1.6\times10^{-4}\) to \(4.1\times10^{-4}\). On the
	\(240\times120\) grid, reducing the tolerance to
	\(2\times10^{-7}\) lowers the mass error from
	\(1.6\times10^{-4}\) to \(1.7\times10^{-5}\), 
	while leaving the price and density errors and \(E_u^{(10^{-2})}\)
	unchanged to three significant digits.
	Here the mass error is the largest deviation of a time-slice
	mass from \(1\). Continuing the iteration on the same grid
	reduces this deviation by about a factor of nine.
	The unchanged solution errors suggest that discretization
	dominates these errors at this resolution.

	\subsection{Monitored bounds and state-constraint structure}
	\label{sec:num-bounds}
	
	\emph{A posteriori verification of the \(K_{\Delta,M,g}\) bounds.}
	The bounds \(M_m=M_u=M_\varpi=5\) are enforced by clipping,
	while the slope threshold \(M_g=5\) is monitored for every initial,
	predictor, and corrector state. Across all five runs,
	\[
	\max m\le1.85,
	\qquad
	\max\{|D_+u|,|D_-u|\}\le1.73,
	\qquad
	\max|\varpi|\le0.81.
	\]
	The upper density bound and the value-function and price clipping were
	inactive. 
	Thus every computed state belongs to \(K_{\Delta,M,g}\), with the
	same bounds on every grid. Since the value-function
	clipping was inactive and the monitored slopes stay below \(M_g\),
	the unprojected value-function updates are feasible for
	\(K_{\Delta,M,g}\), and the computed projections coincide with the
	exact \(X_\Delta\)-metric projections onto
	\(K_{\Delta,M,g}\). Figure~\ref{fig:slope_history} shows the monitored
	slopes.
	
	\begin{figure}[htbp]
		\centering
		\includegraphics[scale=0.7]{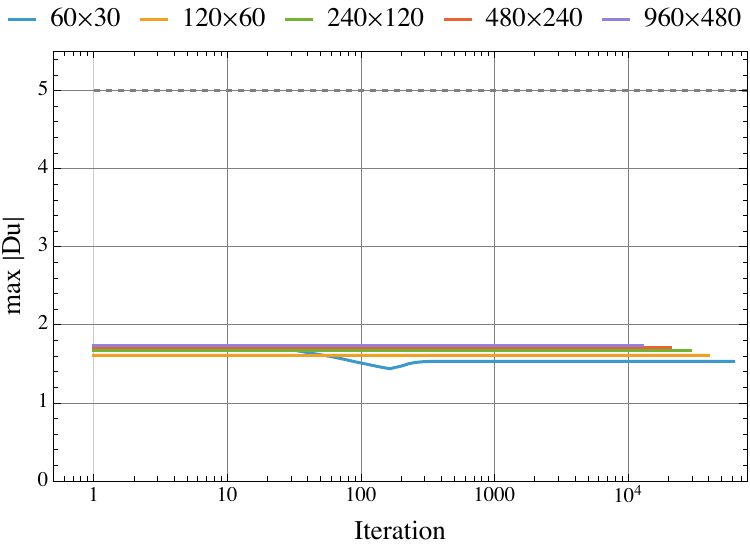}
		\caption{Maximum spatial slope of the corrector iterates on
			each grid, shown at selected iteration numbers. Predictor
			and corrector slopes were checked at every iteration.
			The iteration axis is logarithmic, and the dashed line
			marks \(M_g=5\).}
		\label{fig:slope_history}
	\end{figure}
	
	\emph{Subsolution property and state-constraint comparison.}
	The reduced boundary Hamiltonians of Section~\ref{sec:ham} provide the
	state-constraint closure of the HJB scheme. 
	At an exact VI solution with inactive upper density bound,
	\eqref{eq:complementarity} gives
	\(R_{\Delta,\mathrm{HJB}}\ge0\) everywhere, with equality
	wherever \(m_{i,j+1}>0\). With our sign convention, the first
	inequality is the discrete HJB subsolution condition.
	For the final computed iterates, we measure violations of this
	inequality through the minimum residual, and violations of HJB
	equality through the maximum absolute residual where
	\(m_{i,j+1}>10^{-12}\).
	 At \(240\times120\) and \(960\times480\),
	\(\min R_{\Delta,\mathrm{HJB}}\approx-1.4\times10^{-3}\) on both grids, while the maximum
	absolute residual on
	\(\{(i,j):m_{i,j+1}>10^{-12}\}\) is
	\(1.4\times10^{-3}\) and \(2.4\times10^{-3}\), respectively.
	
	To obtain an upper comparison bound for \(u\), let
	\(u_{\mathrm{sc}}\) denote the exact solution of the discrete
	HJB equations at the final computed price, with terminal data
	\(\bar u\) and the same reduced boundary Hamiltonians.
	This auxiliary problem imposes HJB equality at every spatial
	node, including where the computed density vanishes.
	
	Set \(\varepsilon=\max\{0,-\min R_{\Delta,\mathrm{HJB}}\}\),
	where the minimum includes the boundary nodes.
	Subtracting \(\varepsilon(T-t_j)\) from \(u_{i,j}\) increases
	the HJB residual by \(\varepsilon\), without changing the
	spatial differences or terminal data. The shifted function
	is therefore a discrete HJB subsolution, and comparison gives
	\[
	u_{i,j}\le(u_{\mathrm{sc}})_{i,j}+\varepsilon(T-t_j).
	\]
	Here \(\varepsilon\approx1.4\times10^{-3}\) in both reported runs.
	We approximate \(u_{\mathrm{sc}}\) by a backward fixed-point
	iteration with stopping tolerance \(10^{-13}\) at each time level.
	The resulting estimates of the maximum excess of \(u\) over
	\(u_{\mathrm{sc}}\) are
	\[
	\max(u-u_{\mathrm{sc}})
	\approx
	1.9\times10^{-5}
	\quad\text{and}\quad
	4.6\times10^{-5}
	\]
	on the \(240\times120\) and \(960\times480\) grids, respectively.
	For comparison with the sampled whole-line benchmark, the maximum
	of \(u_{\mathrm{sc}}-u_{\mathrm{ex}}\), evaluated at each boundary
	node and its two adjacent interior nodes over all time levels,
	is estimated to decrease from \(9\times10^{-3}\) to
	\(2.3\times10^{-3}\) between the two grids.
	This comparison includes both HJB discretization error and
	the difference between the computed and exact prices.
	The upper bound above compares \(u\) with \(u_{\mathrm{sc}}\)
	at the same computed price; it does not imply an exact ordering
	between \(u\) and \(u_{\mathrm{ex}}\) on a fixed grid.
	
	\subsection{Solution fields and refinement errors}
	\label{sec:num-figures}
	
	Figures~\ref{fig:density_compare_rev}--\ref{fig:price_compare_rev}
	compare the exact and numerical solutions on the
	\(960\times480\) grid, and Figure~\ref{fig:conv_refine_rev}
	summarizes the refinement errors.
	
	\begin{figure}[htbp]
		{\includegraphics[width=0.48\textwidth]{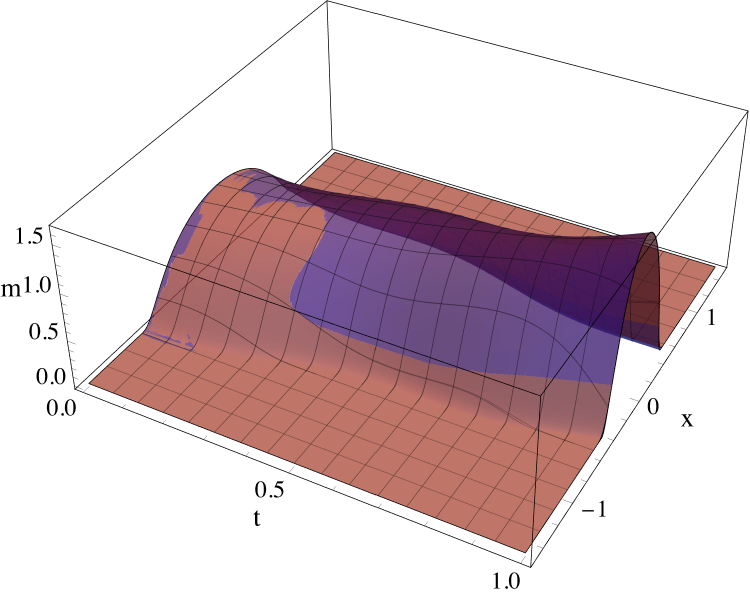}}
		{\includegraphics[width=0.5\textwidth]{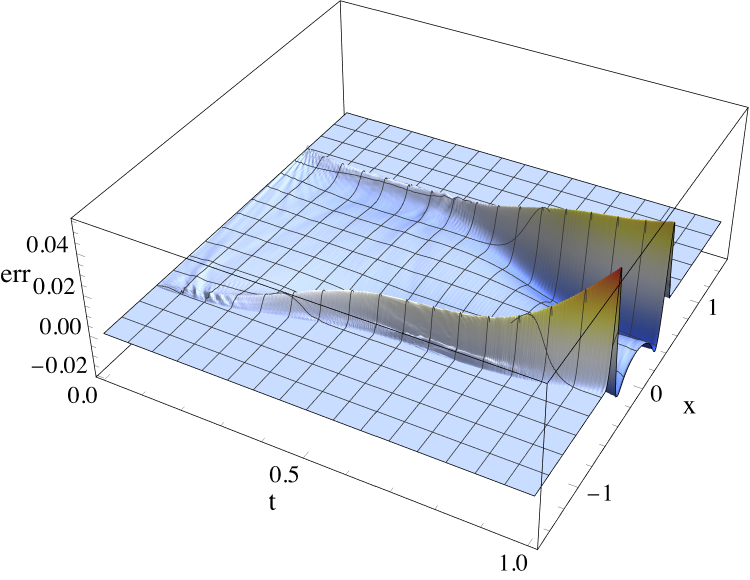}}
		\caption{Exact (blue) and numerical (orange) densities $m(x,t)$ on the
			$960\times480$ grid (left) and the corresponding signed error field \(m_\Delta-m_{\mathrm{ex}}\)
			(right).}
		\label{fig:density_compare_rev}
	\end{figure}
	
	\begin{figure}[htbp]
		{\includegraphics[width=0.48\textwidth]{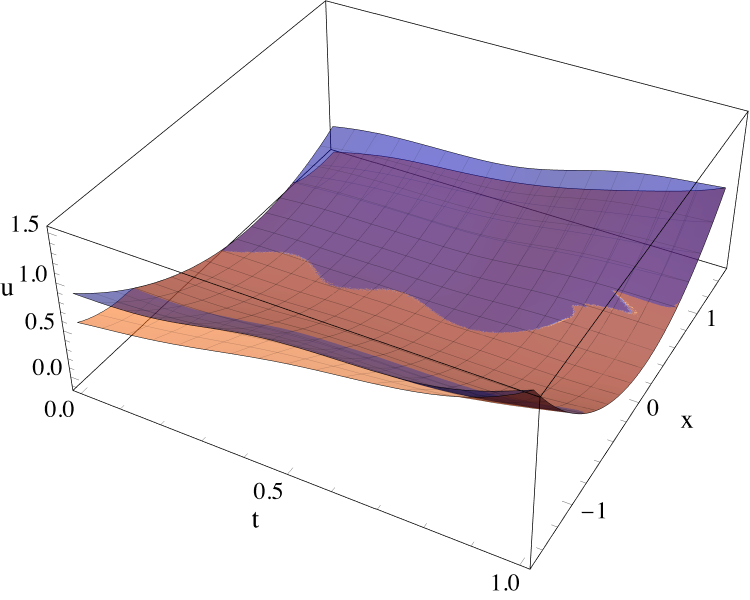}}
		{\includegraphics[width=0.5\textwidth]{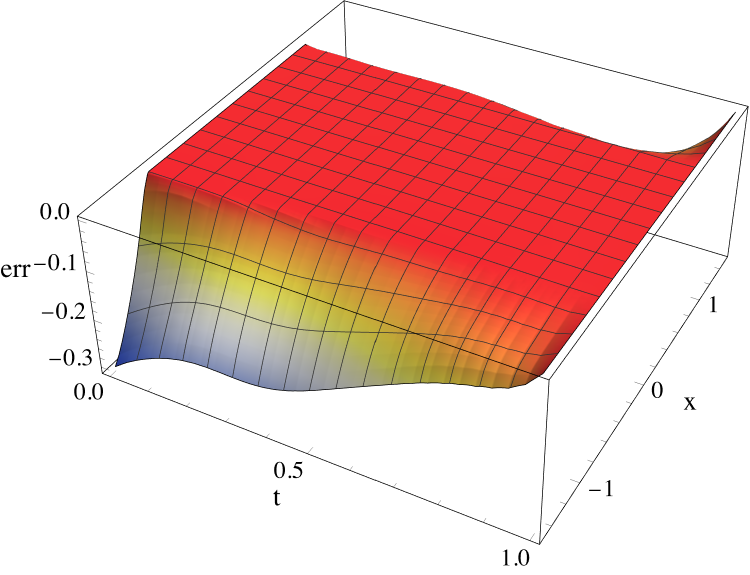}}
		\caption{Exact (blue) and numerical (orange) value functions $u(x,t)$ on the
			$960\times480$ grid (left) and the corresponding signed error field \(u_\Delta-u_{\mathrm{ex}}\)
			(right).
			The error is concentrated in vacuum regions, where the variational
			inequality does not impose HJB equality.}
		\label{fig:value_compare_rev}
	\end{figure}
	
	\begin{figure}[htbp]
		{\includegraphics[width=0.485\textwidth]{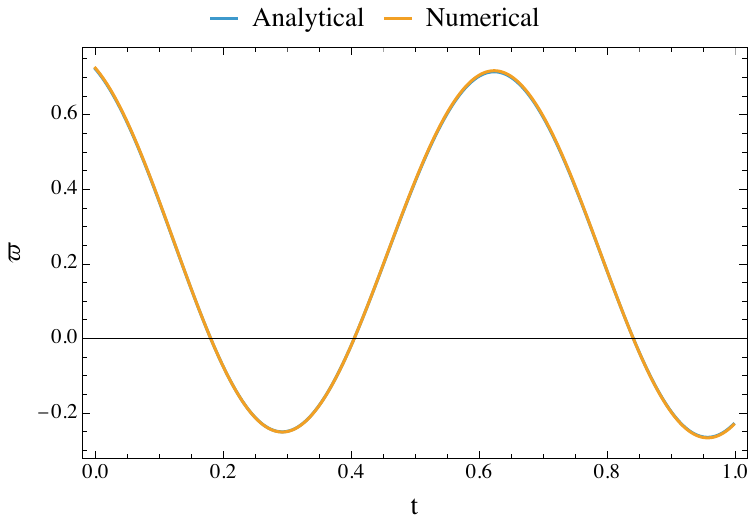}}
		{\includegraphics[width=0.5\textwidth]{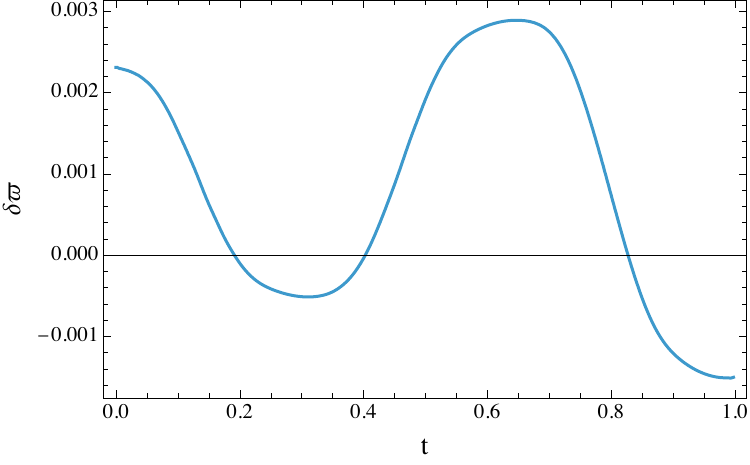}}
		\caption{Exact and numerical price trajectories $\varpi(t)$ on the
			$960\times480$ grid (left) and the corresponding signed error profile \(\varpi_\Delta-\varpi_{\mathrm{ex}}\)
			(right).}
		\label{fig:price_compare_rev}
	\end{figure}
	
	\begin{figure}[htbp]
		\centering
		\includegraphics[scale=0.7]{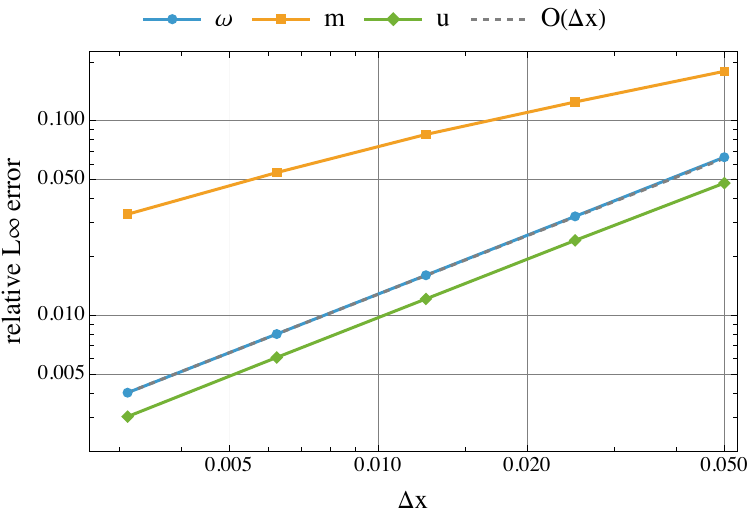}
		\caption{Relative \(\ell^\infty\) errors versus
			\(\Dx\), with an \(O(\Dx)\) reference slope for the price and
			value-function errors. The value-function
			error is restricted to \(m_{\mathrm{ex}}\ge10^{-2}\).}
		\label{fig:conv_refine_rev}
	\end{figure}
	
	\section{Conclusion and Future Work}
	\label{sec:conclusion}

	The weak MFG formulation determines the block structure of a natural
	Riesz preconditioner: the HJB and market-clearing residuals are
	represented in discrete \(L^2\) for the density and price, while the
	transport residual is lifted through the space-time \(H^1\)
	value-function norm. For the quadratic monotone upwind discretization, the
	resulting Riesz representative is monotone and, under mesh-independent
	density, slope, and price bounds, uniformly Lipschitz. The
	extragradient theorem then gives convergence of the exact
	metric-projection method on each fixed grid and a grid-independent
	admissible step size when the method is posed on
	\(K_{\Delta,M,g}\). The experiments are consistent with these results:
	the same step size was used throughout the refinement sweep, the
	monitored bounds remained uniform, and the price, density, and bulk
	value-function errors decreased under refinement.

	The verification in Section~\ref{sec:num-bounds} shows
	that the reported trajectories coincide with finite
	initial segments of the exact metric-projection method.
	Active value-function clipping requires a separate analysis.
	The main remaining problems are
	therefore to analyze this variant or develop an efficient exact metric
	projection, 
	derive a priori mesh-independent slope bounds for the clipped iteration,
	and establish convergence under mesh
	refinement. 
	
	\section*{Acknowledgments}
	
	The authors thank King Abdullah University of Science and Technology (KAUST), Saudi Arabia, for support.
	
	\bibliographystyle{abbrv}
	\bibliography{refs}
	
\end{document}